\numberwithin{equation}{section}
\newtheorem{theorem}{Theorem}[section]
\newtheorem{lemma}[theorem]{Lemma}
\newtheorem{prop}[theorem]{Proposition}
\newtheorem{defn}[theorem]{Definition}
\newtheorem{assumption}[theorem]{Assumption}
\newcommand{\norm}[1]{\left\|#1\right\|}
\newcommand{\inner}[1]{\left\langle #1\right\rangle}
\newcommand{\dd}{\mathrm{d}}
\DeclareMathOperator*{\argmin}{arg\,min}
\newcommand{\leqnomode}{\tagsleft@true}
\newcommand{\reqnomode}{\tagsleft@false}
\title{Convergent Proximal Multiblock ADMM for Nonconvex Dynamics-Constrained Optimization}
\author[1,2]{Bowen Li \thanks{Corresponding author: \url{bowen@lsec.cc.ac.cn} } }
\author[1]{Ya-xiang Yuan}
\affil[1]{Academy of Mathematics and Systems Science, Chinese Academy of Sciences, Beijing 100190, China}
\affil[2]{School of Mathematical Sciences, University of Chinese Academy of Sciences, Beijing 100049, China}
\date\today
\begin{document}

\maketitle

\begin{abstract}
This paper proposes a provably convergent multiblock ADMM for nonconvex optimization with nonlinear dynamics constraints, overcoming the divergence issue in classical extensions. We consider a class of optimization problems that arise from discretization of dynamics-constrained variational problems that are optimization problems for a functional constrained by time-dependent ODEs or PDEs. This is a family of $n$-sum nonconvex optimization problems with nonlinear constraints. We study the convergence properties of the proximal alternating direction method of multipliers (proximal ADMM) applied to those problems. Taking the advantage of the special problem structure, we show that under local Lipschitz and local $L$-smooth conditions, the sequence generated by the proximal ADMM is bounded and all accumulation points are KKT points. Based on our analysis, we also design a procedure to determine the penalty parameters $\rho_i$ and the proximal parameters $\eta_i$. We further prove that among all the subsequences that converge, the fast one converges at the rate of $o(1/k)$. The numerical experiments are performed on 4D variational data assimilation problems and as the solver of implicit schemes for stiff problems. The proposed proximal ADMM has more stable performance than gradient-based methods. We discuss the implementation to solve the subproblems, a new way to solve the implicit schemes, and the advantages of the proposed algorithm.

\end{abstract}

%

\section{Introduction}
Nonconvex optimization problems with nonlinear equality constraints arise in many areas of application. From scientific problems to engineering techniques, these problems are formulated as time-dependent ODE-constrained optimization problems or PDE-constrained optimization problems, such as the strongly constraint 4D variational data assimilation (4DVar) for numerical weather prediction~\cite{le1986variational} and flow control in~\cite{gunzburger2002perspectives}. An important problem is the state estimation for a dynamical system of finite or infinite dimension, which can be stated as the following variational problem 
\begin{subequations}\label{eq:con_optpro}
\begin{align}
    \min_{X(t) \in \mathcal{H}}\ & \int_0^T F(X(t),t) \dd t + G(X(0)),\\
    \mathrm{s.\ t.}\ & \dot{X}(t) = \Phi(X(t)) \label{eq:con_constraints},
\end{align}
\end{subequations}
where $\mathcal{H}$ is some function space. The functions (or functionals for infinite-dimension dynamics) $F, G$ are given and are supposed to be smooth. The operator $\Phi$ maps a Hilbert space (of finite or infinite dimension) to itself. Usually $F$ is called the cost function, and $G$ serves as the regularization term. Although the dynamics~\eqref{eq:con_constraints} under consideration is autonomous, the results of this article can easily be generalized to non-autonomous systems.
We consider a family of nonconvex optimization problems with nonlinear constraints, which originates from the discretization of problem~\eqref{eq:con_optpro} and explicit scheme of dynamics~\eqref{eq:con_constraints}, formulated as following:
\begin{subequations}\label{eq:optpro}
\begin{align}
    \min_x\ & f(x) := \sum_{i=0}^n f_i(x_i),\\
    \mathrm{s.\ t.}\ & x_{j+1} = \varphi(x_j),\ j=0, \ldots, n-1,
\end{align}
\end{subequations}
where $x_i \in \mathbb{R}^d$ and $x = (x_0, \ldots, x_n)^T\in \mathbb{R}^{(n+1)d}$. $f_i: \mathbb{R}^d \to \mathbb{R}$ for $i=0, \ldots, n$ are objective functions and $\varphi: \mathbb{R}^d \to \mathbb{R}^d$ is the map of discrete dynamics. We assume that all maps $f_i$ and $\varphi$ are continuously differentiable. The problem~\eqref{eq:optpro} is related to the trajectory optimization that has been studied in~\cite{kelly2017introduction, betts1998survey}.

The problem~\eqref{eq:optpro} is a constrained optimization problem. Any constrained optimization methods such as sequential quadratic programming (SQP) can be applied to it, as discussed in the literature~\cite{tenny2004nonlinear, betts2010practical, bryson1975applied}. However, problem~\eqref{eq:optpro}has the special $n$-sum structure of its objective function and dynamics constraints. We should design specific algorithms that utilize this structure for better performance. One of the classic methods to solve this problem is the adjoint method~\cite{talagrand1987variational, courtier1987variational}. This method provides a fast way to compute the gradient of problem~\eqref{eq:optpro} with respect to the initial value $x_0$. Then the conjugate gradient method or the quasi-Newton method can be applied to find a solution of~\eqref{eq:optpro}. However, for nonconvex optimization problems, the critical point found by gradient methods usually depends sensitively on the initial guess that is given before running the algorithm. We consider here a different approach. The proximal alternating direction method of multipliers (ADMM) is applied to solve the problem~\eqref{eq:optpro}. The ADMM method is more flexible because it has some parameters to adjust. Usually, different parameter settings will lead the algorithm to converge to different critical points. With some guidelines, there is more possibility to find the solution we want. In addition, the proximal ADMM is not sensitive to the initial guess once we fixed the parameters. We provide a detailed comparison in Section~\ref{sec:num_exp}.

The proximal ADMM depends on the augmented Lagrangian of~\eqref{eq:optpro}, which is defined as 
\begin{subequations} \label{eq:aug_lag}
\begin{align}
    L_\rho(x, \lambda) = & \sum_{i=0}^n f_i(x_i) + \sum_{i=0}^{n-1}\left( \inner{\lambda_i, x_{i+1} - \varphi(x_i)} + \frac{\rho_i}{2}\norm{x_{i+1} - \varphi(x_i)}^2 \right) \\
    ={} & \sum_{i=0}^n f_i(x_i) + \sum_{i=0}^{n-1}\left( \frac{\rho_i}{2}\norm{x_{i+1} - \varphi(x_i) + \frac{1}{\rho_i}\lambda_i}^2 - \frac{1}{2\rho_i}\norm{\lambda_i}^2 \right), \label{eq:aug_lag2}
\end{align}
\end{subequations}
where $\lambda = (\lambda_0, \ldots, \lambda_{n-1})^T \in \mathbb{R}^{(n+1)d}$ are the Lagrange multipliers, which is also called the dual variables, and $\rho = (\rho_0, \ldots,  \rho_{n-1})$ are the penalty parameters. When $\rho_i \equiv 0$ for all $i=0, \ldots, n-1$. The function $L_\rho(x, \lambda)$ becomes 
\begin{equation}\label{eq:lag}
    L(x, \lambda) := \sum_{i=0}^n f_i(x_i) + \sum_{i=0}^{n-1} \inner{\lambda_i, x_{i+1} - \varphi(x_i)},
\end{equation}
which is the Lagrangian of problem~\eqref{eq:optpro}. The proximal ADMM method is 
\begin{equation}\label{eq:admm_ori}
\begin{cases}
    x_i^{k+1} = \argmin \left\{ L_\rho(x_{<i}^{k+1}, x_i, x_{>i}^k, \lambda^k) + \frac{1}{2\eta_i}\norm{x_i - x_i^k}^2 \right\}, & \text{ for } i=0, \ldots, n,\\
    \lambda_j^{k+1} = \lambda_j^k + \rho_j\left( x_{j+1}^{k+1} - \varphi(x_j^{k+1}) \right), & \text{ for } j=0, \ldots, n-1,
\end{cases}
\end{equation}
where $\eta = (\eta_0, \ldots, \eta_n)$ are the parameters that control the weights of the proximal terms. These parameters can also be viewed as the step sizes for each subproblem. We denote $x_{<i} = (x_0, \ldots, x_{i-1})$ and $x_{>i} = (x_{i+1}, \ldots, x_n)$ for simplicity. Substituting~\eqref{eq:aug_lag2} into~\eqref{eq:admm_ori} and omitting all constant term in each subproblem, we obtain 
\begin{equation} \label{eq:admm}
    \begin{cases}
        x_0^{k+1} = \argmin\limits_{x_0}\left\{f_0(x_0) + \frac{\rho_0}{2}\norm{x_1^k - \varphi(x_0) + \frac{1}{\rho_0}\lambda_0^k}^2 + \frac{1}{2\eta_0}\norm{x_0 - x_0^k}^2\right\}, \\
        x_i^{k+1} = \argmin\limits_{x_i}\left\{ f_i(x_i) + \frac{\rho_{i-1}}{2}\norm{x_i - \varphi(x_{i-1}^{k+1}) + \frac{1}{\rho_{i-1}}\lambda_{i-1}^k}^2 \right.\\
        \phantom{x_0^{k+1} = \argmin\{\}} \left. + \frac{\rho_i}{2} \norm{x_{i+1}^k - \varphi(x_i) + \frac{1}{\rho_i}\lambda_i^k}^2 + \frac{1}{2\eta_i}\norm{x_i - x_i^k}^2 \right\}, \text{ for } i = 1, \ldots, n-1,\\
        x_n^{k+1} = \argmin\limits_{x_n} \left\{ f_n(x_n) + \frac{\rho_{n-1}}{2}\norm{x_n - \varphi(x_{n-1}) + \frac{1}{\rho_{n-1}}\lambda_{n-1}^k}^2 + \frac{1}{2\eta_n}\norm{x_n - x_n^k}^2 \right\}, \\
        \lambda_j^{k+1} = \lambda_j^k + \rho_j\left(x_{j+1}^{k+1} - \varphi(x_j^{k+1})\right), \text{ for } j=0, \ldots, n-1.
    \end{cases}
\end{equation}

The alternating direction method of multipliers (ADMM) algorithm was first proposed in~\cite{glowinski1975approximation} as an operator splitting method. The original ADMM works for the case where $n=2$ and the constraints are linear functions. However, the direct extension from $n=2$ to $n>2$ does not necessarily converge when applying the original ADMM without the proximal term, even for convex optimization~\cite{chen2016direct}. The ADMM algorithm is closely related to the augmented Lagrangian method (ALM) which originates from the work of Hestenes~\cite{hestenes1969multiplier} and Powell~\cite{powell1969method}. The properties of this algorithm can be found in the classic textbook by Bertsekas~\cite{bertsekas1982constrained}. Rockafellar~\cite{rockafellar1976augmented} proposed the proximal ALM algorithm by adding a proximal term to the subproblem. This method has advantages in both theory and applications, especially for nonconvex optimization. The ADMM can be viewed as the alternating coordinate descent version of primal descent in the ALM. For this reason, the proximal term can be introduced to each subproblem of the ADMM algorithm, which can be viewed as an alternating coordinate descent version of the proximal ALM algorithm. This motivates the algorithm~\eqref{eq:admm_ori}.

\subsection{Related work}
\textbf{Convex optimization with linear constraints.} Over the past two decades, numerous research works have been devoted to understanding the behavior of the ADMM algorithm for convex optimization, which are problems with convex objective functions and linear constraints. It has become very popular for the large-scale problems that arise in statistical learning, signal processing, computer vision, etc.~\cite{boyd2011distributed, yin2008bregman, schizas2007consensus, zhang2011unified}. This promising computational performance stimulates a large interest in studying the ADMM algorithm theoretically. For the case where $n=2$, the algorithm converges with very mild conditions, see~\cite{bertsekas1989parallel, boyd2011distributed}. Under the same conditions, in~\cite{he20121, monteiro2013iteration, davis2017convergence} the authors showed that the ADMM converges at a rate of $o(1/k)$. Although the direct extension of ADMM to multiblock problems does not converge as suggested in~\cite{chen2016direct}, there is much research on modifying the original algorithm such that it has convergence guarantees for optimization problems with $n \ge 3$. The extension has two types, the Gauss-Seidel decomposition and the Jacobian decomposition. The first uses the updated primal variables immediately in the current iteration as the algorithm~\eqref{eq:admm_ori}, while the second uses only the information of the previous iteration. In~\cite{he2012alternating, he2015full} the authors applied a prediction-correction approach to obtain convergent modified ADMM algorithms for multiblock problems, and in~\cite{deng2017parallel} the authors added the proximal term to each subproblem and proved the convergence of the proposed algorithm.

\textbf{Nonconvex optimization with linear constraints.}
Recently, there is increasing interest in proving the convergence results of ADMM for nonconvex optimization. The first step is to consider problems with nonconvex objectives with linear constraints. Many applications demonstrate that the ADMM algorithm also performs well in this case, such as phase retrieval~\cite{wen2012alternating}, distributed matrix factorization~\cite{zhang2014asynchronous}, tensor decomposition~\cite{liavas2015parallel}, etc. In~\cite{themelis2020douglas} the authors proved the convergence of the original ADMM algorithms for case $n=2$. The papers~\cite{hong2016convergence} and~\cite{wang2019global} show the convergence of the direct extension of ADMM with Gauss-Seidel decomposition for some certain family of nonconvex optimization problems. In~\cite{yang2022proximal} the authors proved the convergence of the proximal ADMM algorithm with Jacobian decomposition for nonconvex optimization problems with linear constraints.

\textbf{Nonconvex optimization with nonlinear constraints.} Due to the technical difficulty, there is even fewer research on the convergence analysis of the ADMM algorithm for nonconvex optimization problems with nonlinear constraints. In~\cite{xie2021complexity} the authors proposed a method to show the convergence of the proximal ALM algorithm for nonconvex optimization with nonlinear constraints. This paper is the main source of inspiration for our work. In~\cite{cohen2022dynamic}, the authors proved the convergence for case $n=2$ of a linearized proximal ADMM algorithm with a stronger assumption. Our work can be viewed as a generalization of this result. There are also attempts at modifying the proximal ADMM algorithm to obtain the convergence property. In recent work~\cite{sun2024dual}, the authors proposed the dual descent rule to update the dual variable in each iteration. The ADMM algorithm attempts to find the saddle point of the augmented Lagrangian. It is related to some recent work on designing the appropriate algorithm for nonconvex-concave minimax problems, such as~\cite{xu2024derivative}. The work~\cite{li2025numerical} shows the performance of the linearized proximal ADMM algorithm applied to data variational problems.

\subsection{Contributions}
We consider a class of nonconvex optimization problems with nonlinear constraints that come from the discretization of dynamics-constrained variational problems. Unlike traditional adjoint method, we propose a new framework that applies the proximal ADMM algorithm to solve this family of problems.

Exploiting the special structure of the problems~\eqref{eq:optpro}, we introduce an induction technique, which enables us to generalize the analysis in~\cite{xie2021complexity} and~\cite{cohen2022dynamic} to the multiblock proximal ADMM algorithm. This also overcomes the difficulty of extending the result in~\cite{hong2016convergence, wang2019global} to nonlinear dynamics constraints. For differentiable functions that are local Lipschitz and local $L$-smooth with only mild additional assumptions, we show that the sequence $(x^k, \lambda^k)$ generated by the algorithm~\eqref{eq:admm} stays in a bounded domain depending on the initial point and parameters, and any accumulation points of the sequence are KKT points of problem~\eqref{eq:optpro}. Based on the proof, we design a procedure for successively choosing the penalty parameters $\rho_i$ and the proximal parameters $\eta_i$ that guarantee convergence. Furthermore, we show that among all subsequences that converge, the fast convergence rate is $o(1/k)$, where $k$ is the iteration number.

In addition, we discuss several generalizations of the problem~\eqref{eq:optpro}, to which the convergence analysis framework we proposed applied without a major difference. These include non-autonomous dynamics, semi-implicit schemes, implicit schemes, and optimal control problems.

Finally, we perform numerical experiments on 4D data assimilation problems and on solving implicit schemes of stiff equations. The implementation of the solvers for the subproblems and a new way to solve the implicit schemes as an optimization problem rather than nonlinear equations are discussed in detail.

\subsection{Organization}
The remainder of this paper is organized as follows. Section~\ref{sec:pre} presents some preliminary assumptions, definitions, and propositions used in this paper. In Section~\ref{sec:con_ana}, the main convergence analysis with detailed proofs is presented and the generalization of our convergence results to a wider range of optimization problems are discussed. In Section~\ref{sec:num_exp}, we apply the proximal ADMM algorithm to some typical applications and report the numerical results. We summarize the paper and discuss future work in Section~\ref{sec:conclusion}.

\section{Preliminaries} \label{sec:pre}

In this section, we discuss the assumptions and some background that are necessary in the convergence analysis of the proximal ADMM algorithm~\eqref{eq:admm}.

Let $\norm{\cdot}$ denote the Euclidean norm of a vector and the induced $l_2$ norm of a matrix. Throughout the paper, we assume the following.
\begin{assumption}\label{asm:level_set}
    There exist parameters $\rho_i^0 > 0$ for $i=0, \ldots, n-1$, such that the function $\sum_{i=0}^n f_i(x_i) + \sum_{i=0}^{n-1}\frac{\rho^0_i}{2}\norm{x_{i+1} - \varphi(x_i)}^2$ has bounded level set, that is, for any $\alpha \in \mathbb{R}$ the set 
    \begin{equation}\label{eq:level_set}
        S_\alpha := \left\{x=(x_0,\ldots, x_n) \Bigg| \sum_{i=0}^n f_i(x_i) + \sum_{i=0}^{n-1}\frac{\rho^0_i}{2}\norm{x_{i+1} - \varphi(x_i)}^2 \le \alpha \right\}
    \end{equation}
    is empty or bounded.
\end{assumption}
This assumption is similar to those of~\cite{xie2021complexity}, which is a weaker assumption than the boundedness of level sets of the objective function. As the functions $f_i$ and $\varphi$ are continuous, an immediate corollary is that the function $\sum_{i=0}^n f_i(x_i) + \sum_{i=0}^{n-1}\frac{\rho^0_i}{2}\norm{x_{i+1} - \varphi(x_i)}^2$ is lower-bounded by some constant $B_\alpha$ on the set $S_\alpha$. We have the following sufficient condition for the above assumption to hold.
\begin{prop}
    Suppose that the functions $f_i$ are bounded below for $i=0, \ldots, n$. In addition, assume that $f_0(x_0) \to \infty$ if $\norm{x_0} \to \infty$ and that the map $\varphi(x)$ is continuous on $\mathbb{R}^d$, then Assumption~\ref{asm:level_set} holds for any $\rho_i^0 > 0$.
\end{prop}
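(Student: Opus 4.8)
The plan is to prove that each sublevel set $S_\alpha$ is bounded by propagating a bound along the constraint chain $x_0 \to x_1 \to \cdots \to x_n$. If $S_\alpha$ is empty there is nothing to prove, so fix $x = (x_0, \ldots, x_n) \in S_\alpha$ and let $m_i$ be a lower bound for $f_i$, which exists by assumption for every $i = 0, \ldots, n$. Since each penalty term $\frac{\rho_i^0}{2}\norm{x_{i+1} - \varphi(x_i)}^2$ is nonnegative, the defining inequality of $S_\alpha$ yields $f_0(x_0) \le \alpha - \sum_{i=1}^n m_i =: \alpha_0$, a constant independent of the chosen point. By the coercivity hypothesis $f_0(x_0) \to \infty$ as $\norm{x_0} \to \infty$, the sublevel set $\{x_0 : f_0(x_0) \le \alpha_0\}$ is bounded, so there is $R_0 > 0$, depending only on $\alpha$ and the parameters, with $\norm{x_0} \le R_0$ for every $x \in S_\alpha$.

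Next I would run the induction. Assume that for some $0 \le i \le n-1$ we have established $\norm{x_i} \le R_i$ for all $x \in S_\alpha$. Because $\varphi$ is continuous, it is bounded on the compact ball $\{\,y : \norm{y} \le R_i\,\}$; set $M_i := \max_{\norm{y} \le R_i} \norm{\varphi(y)}$. Discarding from the inequality defining $S_\alpha$ all the nonnegative squared terms except the $i$-th one and using $f_j \ge m_j$ for all $j = 0, \ldots, n$, we obtain
\[
\frac{\rho_i^0}{2}\,\norm{x_{i+1} - \varphi(x_i)}^2 \le \alpha - \sum_{j=0}^n m_j =: C,
\]
hence, by the triangle inequality,
\[
\norm{x_{i+1}} \le \norm{\varphi(x_i)} + \sqrt{2C/\rho_i^0} \le M_i + \sqrt{2C/\rho_i^0} =: R_{i+1}.
\]
This completes the induction step, so $\norm{x_j} \le R_j$ for all $j = 0, \ldots, n$ and all $x \in S_\alpha$; therefore $S_\alpha$ is contained in the bounded set $\prod_{j=0}^n \{\,x_j : \norm{x_j} \le R_j\,\}$, for any choice of $\rho_i^0 > 0$.

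I do not expect a genuine obstacle here: the argument only uses nonnegativity of the penalty terms to isolate $f_0$, coercivity of $f_0$ to initialize the chain, and continuity of $\varphi$ to turn a bound on $x_i$ into one on $\varphi(x_i)$ and then, via the $i$-th penalty term, into a bound on $x_{i+1}$. The only points requiring a little care are bookkeeping ones — the constants $R_j$, $M_j$, $C$ depend on $\alpha$ (and on the $\rho_i^0$), which is harmless since Assumption~\ref{asm:level_set} only asks each individual $S_\alpha$ to be bounded — and noting that coercivity is needed for $f_0$ alone precisely because the constraint chain lets the first block control all subsequent blocks, a structural feature that recurs throughout the convergence analysis.
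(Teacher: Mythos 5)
Your proposal is correct and follows essentially the same argument as the paper: isolate $f_0$ via the lower bounds on the other $f_i$ and nonnegativity of the penalty terms, use coercivity of $f_0$ to bound $\norm{x_0}$, then propagate boundedness along the chain by induction using the $i$-th penalty term and the fact that the continuous map $\varphi$ sends bounded sets to bounded sets. Your write-up just makes the constants $R_j$, $M_j$, $C$ slightly more explicit than the paper does.
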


\begin{proof}
    For any fixed $\alpha$, the point $x\in S_\alpha$ satisfies
    \begin{equation} \label{eq:lev_bdd}
        \sum_{i=0}^n f_i(x_i) + \sum_{i=0}^{n-1} \frac{\rho_i^0}{2}\norm{x_{i+1} - \varphi(x_i)}^2 \le \alpha.
    \end{equation}
    Since $f_i$ are bounded below, there exists a constant $\beta \in \mathbb{R}$, such that $f_i(x_i) \ge \beta$, for $i=0, \ldots, n$. The inequality~\eqref{eq:lev_bdd} and $\rho_i^0 > 0$ imply that 
    \begin{equation}
        f_0(x_0) \le \alpha - n\beta, 
    \end{equation}
    and 
    \begin{equation}
        \frac{\rho_i^0}{2}\norm{x_{i+1} - \varphi(x_i)}^2 \le \alpha - (n+1)\beta, \text{ for } i=0, \ldots, n-1.
    \end{equation}
    The first inequality shows that there exists $C > 0$, such that $\norm{x_0} < C$. The second shows 
    \begin{equation}
        \norm{x_{i+1}} \le \sqrt{\frac{2}{\rho_i^0}(\alpha - (n+1)\beta)} + \norm{\varphi(x_i)},
    \end{equation}
    for $i=0, \ldots, n-1$. As $\varphi(x)$ is continuous on $\mathbb{R}^d$, it maps bounded sets to bounded sets. Thus, by induction, we can easily show that all $\norm{x_i}$ are bounded.
\end{proof}

We define the local Lipschitz functions and the local $L$-smooth functions as follows.
\begin{defn}
    A function $f$ is said to be local Lipschitz, if for any compact set $S$, there exists a constant $M$ depending on the set $S$, such that
    \begin{equation}
        \norm{f(x) - f(y)} \le M \norm{x - y}, \text{ for any } x, y \in S.
    \end{equation}
    A function $g$ is said to be local $L$-smooth, if for any compact set $S$, there exists a constant $L$ depending on the set S, such that 
    \begin{equation}
        \norm{\nabla g(x) - \nabla g(y)} \le L \norm{x - y}, \text{ for any } x, y \in S.
    \end{equation}
\end{defn}

The next assumption concerns some boundedness and smoothness conditions of the functions $f_i$ for $i=0,\ldots, n$ and $\varphi$ on a bounded set.

\begin{assumption}\label{asm:smoothness}
    Given a compact set $S$, there exist positive constants $M_f, L_f, C_\varphi, M_\varphi, L_\varphi$, such that 
    \begin{align*}
        & \norm{\nabla f_i(x_i)} \le M_f, \ \norm{\nabla f_i(x_i) - \nabla f_i(y_i)} \le L_f\norm{x_i - y_i}, \\
        & \norm{\varphi(x_i)} \le C_\varphi, \ \norm{\nabla \varphi(x_i)} \le M_\varphi, \ \norm{\nabla \varphi(x_i) - \nabla\varphi(y_i)} \le L_\varphi \norm{x_i - y_i},
    \end{align*}
    for all $i=0, \ldots, n$ and for all $x, y \in S$. The constants stated here may depend on the set $S$.
\end{assumption}
Note that this assumption is much weaker than the global $L$-smooth assumptions usually used in the literature, in the sense that if all functions $f_i$ and $\varphi$ are twice continuously differentiable on $\mathbb{R}^d$, then the assumption will be automatically satisfied.

Next, we state the definition of the Karush–Kuhn–Tucker (KKT) point and the theorem on the characterization of the minimal point of the problem~\eqref{eq:optpro}.

\begin{defn}
    The pair $(x^\star, \lambda^\star)$ is said to be a KKT point, if it satisfies 
    \begin{equation}
        \nabla_x L(x^\star, \lambda^\star) = 0 \quad \text{ and } \quad x_{j+1}^\star - \varphi(x_j^\star) = 0,\ \text{ for } j=0, \ldots, n-1,
    \end{equation}
    where the function $L(x, \lambda)$ is the Lagrangian defined by~\eqref{eq:lag}.
\end{defn}

It is easy to verify that the problem~\eqref{eq:optpro} satisfies the well-known linear independence constraint qualification (LICQ) defined as Definition 12.4, Chapter 12.2 in~\cite{nocedal2006numerical}. In addition, by Theorem 12.1, Chapter 12.3 in the same reference, we have the following first-order necessary condition. 
\begin{theorem}
    Suppose that $x^\star = (x_0^\star, \ldots, x_n^\star)$ is a local solution of~\eqref{eq:optpro}, then there is a dual variable $\lambda^\star = (\lambda_0^\star, \ldots, \lambda_{n-1}^\star)$, such that the pair $(x^\star, \lambda^\star)$ is a KKT point.
\end{theorem}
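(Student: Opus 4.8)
The plan is to obtain the first-order necessary conditions by the classical route: verify a constraint qualification at the local solution and then invoke the standard Karush--Kuhn--Tucker theorem. Concretely, I would show that problem~\eqref{eq:optpro} satisfies the linear independence constraint qualification (LICQ) at \emph{every} feasible point, hence in particular at $x^\star$; the existence of $\lambda^\star$ making $(x^\star,\lambda^\star)$ a KKT point then follows verbatim from Theorem~12.1, Chapter~12.3 of~\cite{nocedal2006numerical}. The only substantive work is the LICQ check, which reduces to a rank computation because \eqref{eq:optpro} has only equality constraints.

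For the LICQ check, write the constraint map as $c(x) = (c_0(x),\ldots,c_{n-1}(x))$ with $c_j(x) := x_{j+1} - \varphi(x_j) \in \mathbb{R}^d$; this gives $nd$ scalar equality constraints and no inequality constraints, so all constraints are active at $x^\star$ and LICQ is equivalent to the Jacobian $Dc(x^\star)\in\mathbb{R}^{nd\times(n+1)d}$ having full row rank $nd$. Since $c_j$ depends only on $x_j$ and $x_{j+1}$, the Jacobian is block bidiagonal:
\[
Dc(x^\star)=\begin{pmatrix}
-\nabla\varphi(x_0^\star) & I_d & & & \\
 & -\nabla\varphi(x_1^\star) & I_d & & \\
 & & \ddots & \ddots & \\
 & & & -\nabla\varphi(x_{n-1}^\star) & I_d
\end{pmatrix},
\]
with row blocks indexed by $j=0,\ldots,n-1$ and column blocks indexed by $m=0,\ldots,n$ (the $j$th block row carrying $-\nabla\varphi(x_j^\star)$ in the $x_j$-block and $I_d$ in the $x_{j+1}$-block, up to the usual transpose/sign conventions which are immaterial here). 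Deleting the first column block (the one attached to $x_0$) leaves an $nd\times nd$ block lower-triangular matrix with $I_d$ on every diagonal block, which is invertible; hence $Dc(x^\star)$ already has rank $nd$ on these columns, i.e.\ full row rank. This verifies LICQ — in fact at any $x\in\mathbb{R}^{(n+1)d}$, feasible or not.

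With LICQ in hand at $x^\star$, Theorem~12.1 of~\cite{nocedal2006numerical} yields multipliers $\lambda^\star=(\lambda_0^\star,\ldots,\lambda_{n-1}^\star)$ satisfying $\nabla_x f(x^\star)+\sum_{j=0}^{n-1}(\nabla_x c_j(x^\star))\lambda_j^\star = 0$ together with $x_{j+1}^\star-\varphi(x_j^\star)=0$ for all $j$. The first identity is precisely $\nabla_x L(x^\star,\lambda^\star)=0$ for the Lagrangian~\eqref{eq:lag}, so $(x^\star,\lambda^\star)$ is a KKT point in the sense of the definition preceding the theorem. I do not anticipate any real obstacle: the entire content is the block-triangular rank argument, which is immediate once the sparsity pattern of $Dc$ is written down; the only place requiring care is the bookkeeping of which coordinate blocks each $c_j$ touches and keeping the row/column block indexing consistent.
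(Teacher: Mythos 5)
Your proposal is correct and follows essentially the same route as the paper, which likewise establishes this theorem by noting that LICQ holds for~\eqref{eq:optpro} and then citing Theorem~12.1 of~\cite{nocedal2006numerical}; the paper merely states the LICQ verification is easy, whereas you spell out the block-bidiagonal Jacobian and the full-row-rank argument obtained by deleting the $x_0$ column block. That rank computation is exactly the content the paper leaves implicit, and it is carried out correctly.
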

 For further reference, one can also check~\cite{sun2006optimization}. Based on this theorem, although the condition is not sufficient, it guides us to show that the sequences generated by the algorithm~\eqref{eq:admm} converge to some KKT point of the problem. For the characterization of the convergence, we have the following proposition.

\begin{prop}
    Let $(x^k, \lambda^k)$ be generated by the proximal ADMM algorithm~\eqref{eq:admm_ori}, when it is applied to problem~\eqref{eq:optpro}. If $\norm{x^{k+1} - x^k} \to 0$ and $\norm{\lambda^{k+1} - \lambda^k} \to 0$, then all accumulation points, if they exist, are KKT points.
\end{prop}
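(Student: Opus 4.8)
The plan is to read the KKT conditions off the first-order optimality of the ADMM subproblems in~\eqref{eq:admm} combined with the dual update, and then pass to the limit along a convergent subsequence. I would first dispose of primal feasibility: the dual update gives $\lambda_j^{k+1}-\lambda_j^k=\rho_j(x_{j+1}^{k+1}-\varphi(x_j^{k+1}))$, so $\norm{\lambda^{k+1}-\lambda^k}\to 0$ and $\rho_j>0$ force $x_{j+1}^{k+1}-\varphi(x_j^{k+1})\to 0$ for every $j$. If $(x^{k_m},\lambda^{k_m})\to(x^\star,\lambda^\star)$, then $\norm{x^{k+1}-x^k}\to 0$ makes the shifted sequence $x^{k_m+1}$ converge to the same limit $x^\star$, and continuity of $\varphi$ yields $x_{j+1}^\star=\varphi(x_j^\star)$.

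For the stationarity condition $\nabla_x L(x^\star,\lambda^\star)=0$, I would write the gradient-zero condition of the $x_i$-subproblem at its minimizer $x_i^{k+1}$. For a middle block $1\le i\le n-1$ it reads
\begin{equation*}
\nabla f_i(x_i^{k+1}) + \rho_{i-1}\Bigl(x_i^{k+1}-\varphi(x_{i-1}^{k+1})+\tfrac{1}{\rho_{i-1}}\lambda_{i-1}^k\Bigr) - \nabla\varphi(x_i^{k+1})\trans\rho_i\Bigl(x_{i+1}^k-\varphi(x_i^{k+1})+\tfrac{1}{\rho_i}\lambda_i^k\Bigr) + \tfrac{1}{\eta_i}(x_i^{k+1}-x_i^k)=0 .
\end{equation*}
Substituting the dual updates $\lambda_{i-1}^{k+1}=\lambda_{i-1}^k+\rho_{i-1}(x_i^{k+1}-\varphi(x_{i-1}^{k+1}))$ and $\lambda_i^{k+1}=\lambda_i^k+\rho_i(x_{i+1}^{k+1}-\varphi(x_i^{k+1}))$ to eliminate the penalty terms, and using $\nabla_{x_i}L(x,\lambda)=\nabla f_i(x_i)+\lambda_{i-1}-\nabla\varphi(x_i)\trans\lambda_i$, this rearranges to
\begin{equation*}
\nabla_{x_i}L(x^{k+1},\lambda^{k+1}) = \nabla\varphi(x_i^{k+1})\trans\rho_i\bigl(x_{i+1}^k-x_{i+1}^{k+1}\bigr) - \tfrac{1}{\eta_i}\bigl(x_i^{k+1}-x_i^k\bigr) .
\end{equation*}
The same manipulation for the boundary blocks gives $\nabla_{x_0}L(x^{k+1},\lambda^{k+1})=\nabla\varphi(x_0^{k+1})\trans\rho_0(x_1^k-x_1^{k+1})-\tfrac{1}{\eta_0}(x_0^{k+1}-x_0^k)$ and $\nabla_{x_n}L(x^{k+1},\lambda^{k+1})=-\tfrac{1}{\eta_n}(x_n^{k+1}-x_n^k)$; for $i=n$ the forward penalty term is absent, hence no residual appears.

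Finally I would let $k=k_m\to\infty$ in these identities: along the subsequence $x^{k+1}\to x^\star$ and $\lambda^{k+1}\to\lambda^\star$ since the increments vanish, hence $\nabla\varphi(x_i^{k+1})$ stays bounded by continuity, while $\norm{x_{i+1}^k-x_{i+1}^{k+1}}\to 0$ and $\norm{x_i^{k+1}-x_i^k}\to 0$; thus every right-hand side above tends to $0$. Continuity of $\nabla f_i$ and $\nabla\varphi$ then gives $\nabla_{x_i}L(x^\star,\lambda^\star)=0$ for each $i$, i.e.\ $\nabla_x L(x^\star,\lambda^\star)=0$, and together with feasibility this shows $(x^\star,\lambda^\star)$ is a KKT point. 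I expect the only delicate step to be the Gauss--Seidel bookkeeping: block $i$ is updated using the stale iterate $x_{i+1}^k$ instead of $x_{i+1}^{k+1}$, which is exactly why substituting the dual relation leaves the residual $\nabla\varphi(x_i^{k+1})\trans\rho_i(x_{i+1}^k-x_{i+1}^{k+1})$; one has to notice that this residual is again governed by $\norm{x^{k+1}-x^k}\to 0$. No global boundedness of the sequence is required, because each limit is taken along an already convergent subsequence.
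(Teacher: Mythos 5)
Your proof is correct and follows essentially the same route as the paper: pass to the limit along a convergent subsequence in the subproblem first-order conditions and the dual update, using that successive differences vanish. The only cosmetic difference is that you eliminate the quadratic-penalty terms via the dual update at the iterate level (yielding the residual identity for $\nabla_{x_i}L(x^{k+1},\lambda^{k+1})$, the same algebra the paper performs when deriving~\eqref{eq:first_ord}), whereas the paper first obtains $\nabla_x L_\rho(x^\star,\lambda^\star)=0$ and then converts to $\nabla_x L(x^\star,\lambda^\star)=0$ using feasibility at the limit.
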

\begin{proof}
    Let $(x^\star, \lambda^\star)$ be an accumulation point of the sequence $(x^k, \lambda^k)$. Then there is a subsequence $(x^{k_s}, \lambda^{k_s})$ such that $\lim_{s\to \infty} (x^{k_s}, \lambda^{k_s}) \to (x^\star, \lambda^\star)$. The assumption $\norm{x^{k+1} - x^k} \to 0$ and $\norm{\lambda^{k+1} - \lambda^k}\to 0$ as $k\to \infty$ implies $\lim_{s\to \infty} (x^{k_s + 1}, \lambda^{k_s + 1}) \to (x^\star, \lambda^\star)$. The first-order optimality conditions of the subproblems of algorithm~\eqref{eq:admm_ori} is 
    \begin{equation}
        \frac{1}{2\eta_i}\left(x_i^{k_s+1} - x_i^{k_s}\right) = \nabla_i L_\rho(x_{<i}^{k_s+1}, x_i^{k_s+1}, x_{>i}^{k_s}, \lambda^{k_s}),
    \end{equation}
    for $i=0, \ldots, n$, and the dual updates is 
    \begin{equation}
        \frac{1}{\rho_j}\left( \lambda_j^{k_s+1} - \lambda_j^{k_s} \right) = x_{j+1}^{k_s+1} - \varphi(x_j^{k_s+1}),
    \end{equation}
    for $j = 0, \ldots, n-1$. Because the functions $L_\rho(x, \lambda)$ are smooth, by letting $s\to \infty$ we obtain 
    \begin{align} \label{eq:kkt_con}
        \nabla_x L_\rho(x^\star, \lambda^\star) = 0 \quad \text{ and } \quad x_{j+1}^\star - \varphi(x_j^\star) = 0,\ \text{ for } j=0, \ldots, n-1.
    \end{align}
    Notice that we have the relation 
    \begin{equation}
        L(x, \lambda) =  L_\rho(x, \lambda) - \sum_{j=0}^{n-1} \frac{\rho_j}{2} \norm{x_{j+1} - \varphi(x_j)}^2.
    \end{equation}
    Combining this with~\eqref{eq:kkt_con}, we obtain
    \begin{equation}
        \nabla_x L(x^\star, \lambda^\star) = 0 \quad \text{ and } \quad x_{j+1}^\star - \varphi(x_j^\star) = 0,\ \text{ for } j=0, \ldots, n-1,
    \end{equation}
    which implies that $(x^\star, \lambda^\star)$ is a KKT point.
\end{proof}
This proposition indicates that we can characterize the convergence of the proximal ADMM algorithm by estimating the upper bound of the term $\norm{x^{k+1} - x^k}$ and $\norm{\lambda^{k+1} - \lambda^k}$. Our following arguments are based on this idea.

\section{Convergence analysis} \label{sec:con_ana}
In this section, we establish the convergence property of the proximal ADMM algorithm. This framework for convergence analysis can be generalized to other problem formulations. We discuss some possible generalizations of the problem~\eqref{eq:optpro} after the main results. 

\subsection{Main results}\label{sec:main_results}
The main idea in our proof is the Lyapunov analysis. From the iteration~\eqref{eq:admm_ori}, we know that 
\begin{equation}
    L_\rho(x_{<i}^{k+1}, x_i^{k+1}, x_{>i}^k, \lambda^k) + \frac{1}{2\eta_i}\norm{x_i^{k+1} - x_i^k}^2 \le L_\rho(x_{<i}^{k+1}, x_i^k, x_{>i}^k, \lambda^k),
\end{equation}
for $i=0, \ldots, n$. Summing up all the inequalities from $i=0$ to $n$, we obtain 
\begin{equation}\label{eq:primal_dec}
    L_\rho(x^{k+1}, \lambda^k) + \sum_{i=0}^n \frac{1}{2\eta_i} \norm{x_i^{k+1} - x_i^k}^2 \le L_\rho(x^k, \lambda^k).
\end{equation}
In order to analyze the convergence of proximal ADMM, the above inequalities guide us to introduce the Lyapunov function 
\begin{equation}\label{eq:lyapunov}
    E(k) = L_\rho(x^k, \lambda^k) + \sum_{i=0}^n \frac{1}{4\eta_i}\norm{x_i^k - x_i^{k-1}}^2,
\end{equation}
for any $k \ge 1$. Our aim is to show that $E(k)$ decreases as $k$ increases. To achieve that, we calculate 
\begin{align}\label{eq:lya_diff}
    & E(k+1) - E(k) \nonumber \\ 
    ={} & L_\rho(x^{k+1}, \lambda^{k+1}) - L_\rho(x^k, \lambda^k) + \sum_{i=0}^n \frac{1}{4\eta_i}\left(\norm{x_i^{k+1} - x_i^k}^2 - \norm{x_i^k - x_i^{k-1}}^2\right) \nonumber \\
    ={} & L_\rho(x^{k+1}, \lambda^{k+1}) - L_\rho(x^{k+1}, \lambda^k) + L_\rho(x^{k+1}, \lambda^k) - L_\rho(x^k, \lambda^k) \nonumber \\
    & + \sum_{i=0}^n \frac{1}{4\eta_i}\left(\norm{x_i^{k+1} - x_i^k}^2 - \norm{x_i^k - x_i^{k-1}}^2\right) \nonumber \\
    \le & L_\rho(x^{k+1},\lambda^{k+1}) - L_\rho(x^{k+1}, \lambda^k) - \sum_{i=0}^n \frac{1}{4\eta_i}\left(\norm{x_i^{k+1} - x_i^k}^2 + \norm{x_i^k - x_i^{k-1}}^2\right) \nonumber \\
    ={} & \sum_{i=0}^{n-1} \inner{\lambda_i^{k+1} - \lambda_i^{k}, x_{i+1}^{k+1} - \varphi(x_i^{k+1})} - \sum_{i=0}^n \frac{1}{4\eta_i}\left(\norm{x_i^{k+1} - x_i^k}^2 + \norm{x_i^k - x_i^{k-1}}^2\right) \nonumber \\
    ={} & \sum_{i=0}^{n-1} \frac{1}{\rho_i}\norm{\lambda_i^{k+1} - \lambda_i^k}^2 - \sum_{i=0}^n \frac{1}{4\eta_i}\left(\norm{x_i^{k+1} - x_i^k}^2 + \norm{x_i^k - x_i^{k-1}}^2\right),
\end{align}
where the first inequality comes from~\eqref{eq:primal_dec} and the last equality is derived from the update in~\eqref{eq:admm_ori}. The remaining job is to bound the term $\sum_{i=0}^{n-1} \norm{\lambda_i^{k+1} - \lambda_i^k}^2$. Before this, we start from a lemma that provides the bound for $\norm{\lambda^{k+1}}$, which is useful in our later analysis. 
\begin{lemma} \label{lem:dual_bound}
    Let $\lambda^k = (\lambda_0^k, \ldots, \lambda_{n-1}^k)$ be the Lagrange multipliers generated by the algorithm~\eqref{eq:admm_ori}. Suppose that Assumption~\ref{asm:smoothness} holds, and $x^{k+1} = (x_0^{k+1}, \ldots, x_n^{k+1}) \in S$, where $S$ is a bounded set, then 
    \begin{equation}\label{eq:dual_bdd}
        \norm{\lambda_i^{k+1}} \le \frac{1-M_\varphi^{n-i}}{1-M_\varphi}M_f + \sum_{j=i}^{n-2}\rho_{j+1}M_\varphi^{j+1-i}\norm{x_{j+2}^{k+1} - x_{j+2}^k} + \sum_{j=i}^{n-1} \frac{M_\varphi^{j-i}}{\eta_{j+1}}\norm{x_{j+1}^{k+1} - x_{j+1}^k},
    \end{equation}
    for $i = 0, \ldots, n-1$. The bound~\eqref{eq:dual_bdd} can be rewritten as 
    \begin{equation}\label{eq:dual_bound}
        \norm{\lambda_i^{k+1}} \le \frac{1-M_\varphi^{n-i}}{1-M_\varphi}M_f + \sum_{j=i}^{n-1}B_{j,i}\norm{x_{j+1}^{k+1} - x_{j+1}^k},
    \end{equation}
    where 
    \begin{equation}\label{eq:dual_const}
        B_{j,i} = \left\{\begin{aligned}
            & \rho_j M_\varphi^{j-i} + \frac{M_\varphi^{j-i}}{\eta_{j+1}}, & \text{ for } j > i, \\
            & \frac{1}{\eta_{i+1}}, & \text{ for } j=i,
        \end{aligned} \right.
    \end{equation}
    for $i=0, \ldots, n-1$.
\end{lemma}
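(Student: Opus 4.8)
The plan is to read off a backward recursion for the multipliers from the first-order optimality conditions of the subproblems in~\eqref{eq:admm_ori}, and then bound $\norm{\lambda_i^{k+1}}$ by downward induction on $i$, starting from the last block $i=n-1$.

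First I would write the stationarity condition of the $i$-th block update, namely
\[ \nabla_i L_\rho\bigl(x_{<i}^{k+1}, x_i^{k+1}, x_{>i}^k, \lambda^k\bigr) + \frac{1}{\eta_i}\bigl(x_i^{k+1} - x_i^k\bigr) = 0, \qquad i = 0,\ldots, n, \]
expand $\nabla_i L_\rho$ from~\eqref{eq:aug_lag}, and substitute the dual update $\lambda_j^{k+1} = \lambda_j^k + \rho_j\bigl(x_{j+1}^{k+1} - \varphi(x_j^{k+1})\bigr)$. The two coupling terms attached to block $i$ then collapse: the ``left'' term is exactly $\lambda_{i-1}^{k+1}$, while the ``right'' term equals $\lambda_i^{k+1} - \rho_i(x_{i+1}^{k+1} - x_{i+1}^k)$, the correction $x_{i+1}^{k+1} - x_{i+1}^k$ appearing because the Gauss--Seidel update of $x_i$ still uses the old iterate $x_{i+1}^k$. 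For $i=n$ this yields the base identity
\[ \lambda_{n-1}^{k+1} = -\nabla f_n\bigl(x_n^{k+1}\bigr) - \frac{1}{\eta_n}\bigl(x_n^{k+1} - x_n^k\bigr), \]
and for $1 \le i+1 \le n-1$ it yields, after relabelling,
\[ \lambda_i^{k+1} = \nabla\varphi\bigl(x_{i+1}^{k+1}\bigr)\trans \lambda_{i+1}^{k+1} - \nabla f_{i+1}\bigl(x_{i+1}^{k+1}\bigr) - \rho_{i+1}\nabla\varphi\bigl(x_{i+1}^{k+1}\bigr)\trans\bigl(x_{i+2}^{k+1} - x_{i+2}^k\bigr) - \frac{1}{\eta_{i+1}}\bigl(x_{i+1}^{k+1} - x_{i+1}^k\bigr), \]
for $i = 0,\ldots, n-2$.

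Next, since $x^{k+1}\in S$, I would take norms and invoke Assumption~\ref{asm:smoothness} ($\norm{\nabla f_i}\le M_f$ and $\norm{\nabla\varphi}\le M_\varphi$ on $S$, so the Jacobian transpose also has operator norm at most $M_\varphi$). This gives $\norm{\lambda_{n-1}^{k+1}} \le M_f + \tfrac{1}{\eta_n}\norm{x_n^{k+1}-x_n^k}$ together with the one-step bound
\[ \norm{\lambda_i^{k+1}} \le M_\varphi \norm{\lambda_{i+1}^{k+1}} + M_f + \rho_{i+1}M_\varphi\norm{x_{i+2}^{k+1}-x_{i+2}^k} + \frac{1}{\eta_{i+1}}\norm{x_{i+1}^{k+1}-x_{i+1}^k}. \]
Iterating this inequality downward from $n-1$ to $i$, the constant terms sum to $M_f\sum_{\ell=0}^{n-i-1}M_\varphi^\ell = \frac{1-M_\varphi^{n-i}}{1-M_\varphi}M_f$ (read as $(n-i)M_f$ when $M_\varphi=1$), and each $x$-difference term picks up the powers $M_\varphi^{j-i}$ accumulated along the way, producing exactly~\eqref{eq:dual_bdd}. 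Finally, to reach the form~\eqref{eq:dual_bound}--\eqref{eq:dual_const} I would re-index the first sum in~\eqref{eq:dual_bdd} by $j\mapsto j-1$ and merge it with the second: the coefficient of $\norm{x_{j+1}^{k+1}-x_{j+1}^k}$ is then $1/\eta_{i+1}$ when $j=i$ (only the second sum contributes) and $M_\varphi^{j-i}\rho_j + M_\varphi^{j-i}/\eta_{j+1}$ when $j>i$, which is precisely $B_{j,i}$.

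The bulk of the work is bookkeeping rather than any genuine difficulty. The step that needs care is the manipulation of the stationarity condition: correctly recognizing which grouping of $\lambda^k$ and the penalty term becomes $\lambda_{i-1}^{k+1}$ versus $\lambda_i^{k+1}$, and tracking the extra $\rho_i(x_{i+1}^{k+1}-x_{i+1}^k)$ term forced by the Gauss--Seidel ordering. The second thing to check is that the geometric-sum accounting in the induction reproduces the exact constants in~\eqref{eq:dual_bdd} and~\eqref{eq:dual_const}; this is where an off-by-one in the exponents of $M_\varphi$ or in the summation ranges would be easiest to make, so I would verify the claim directly for $i=n-1$ and $i=n-2$ before asserting the general inductive step.
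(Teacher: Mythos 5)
Your proposal is correct and follows essentially the same route as the paper: deriving the backward recursion for $\lambda_i^{k+1}$ from the subproblem stationarity conditions combined with the dual update (with the base identity at $i=n-1$), then unrolling it with the accumulated powers of $M_\varphi$ and re-indexing to obtain the coefficients $B_{j,i}$. The only difference is cosmetic — the paper telescopes after multiplying by $M_\varphi^i$ rather than iterating directly — and your remark about reading the geometric sum as $(n-i)M_f$ when $M_\varphi=1$ is a reasonable extra precaution the paper leaves implicit.
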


\begin{proof}
    The proof is an iterative argument on the index $i$. From the first-order optimality condition of the subproblems in the proximal ADMM algorithm~\eqref{eq:admm}, we obtain
    \begin{multline}
        \nabla f_i(x_i^{k+1}) + \rho_{i-1}\left(x_i^{k+1} - \varphi(x_{i-1}^{k+1}) + \frac{1}{\rho_{i-1}}\lambda_{i-1}^k \right) \\- \rho_i \nabla \varphi(x_i^{k+1})^T \left(x_{i+1}^k - \varphi(x_i^{k+1}) + \frac{1}{\rho_i} \lambda_i^k \right) + \frac{1}{\eta_i}\left(x_i^{k+1} - x_i^k\right) = 0,
    \end{multline}
    for $i=1, \ldots, n-1$, and 
    \begin{equation}
        \nabla f_n(x_n^{k+1}) + \rho_{n-1}\left(x_n^{k+1} - \varphi(x_{n-1}^{k+1}) + \frac{1}{\rho_{n-1}}\lambda_{n-1}^k \right) + \frac{1}{\eta_n}\left(x_n^{k+1} - x_n^k\right) = 0.
    \end{equation}
    Combining these equations with the update rule for the multipliers in~\eqref{eq:admm}, the equations are transformed to 
    \begin{equation}\label{eq:first_ord}
        \lambda_{i-1}^{k+1} - \nabla \varphi(x_i^{k+1})^T \lambda_i^{k+1} = -\nabla f_i(x_i^{k+1}) - \rho_i \nabla \varphi(x_i^{k+1})^T \left(x_{i+1}^{k+1} - x_{i+1}^k\right) - \frac{1}{\eta_i}\left(x_i^{k+1} - x_i^k\right),
    \end{equation}
    for $i=1, \ldots, n-1$, and for $i=n$, 
    \begin{equation} \label{eq:fir_ord_n}
        \lambda_{n-1}^{k+1} = -\nabla f_n(x_n^{k+1}) - \frac{1}{\eta_n}\left(x_n^{k+1} - x_n^k\right).
    \end{equation}
    From the last equation, we have that 
    \begin{align}\label{eq:init_est}
        \norm{\lambda_{n-1}^{k+1}} \le & \norm{\nabla f_n(x_n^{k+1})} + \frac{1}{\eta_n} \norm{x_n^{k+1} - x_n^k} \nonumber \\
        \le & M_f + \frac{1}{\eta_n}\norm{x_n^{k+1} - x_n^k}.
    \end{align}
    Next, substituting $i$ to $i+1$ in~\eqref{eq:first_ord}, we have the iterative formula
    \begin{multline}\label{eq:fir_ord}
        \lambda_i^{k+1} = \nabla \varphi(x_{i+1}^{k+1})^T \lambda_{i+1}^{k+1} - \nabla f_{i+1}(x_{i+1}^{k+1}) \\- \rho_{i+1}\nabla \varphi(x_{i+1}^{k+1})^T\left(x_{i+2}^{k+1} - x_{i+2}^k\right) - \frac{1}{\eta_{i+1}}\left(x_{i+1}^{k+1} - x_{i+1}^k\right),
    \end{multline}
    for $i=0, \ldots, n-2$. From these relations, we can deduce by Assumption~\ref{asm:smoothness} that 
    \begin{equation}
        \norm{\lambda_i^{k+1}} \le M_\varphi \norm{\lambda_{i+1}^{k+1}} + M_f + \rho_{i+1}M_\varphi \norm{x_{i+2}^{k+1} - x_{i+2}^k} + \frac{1}{\eta_{i+1}}\norm{x_{i+1}^{k+1} - x_{i+1}^k},
    \end{equation}
    for $i=0, \ldots, n-2$. Multiplying each side by $M_\varphi^i$ and summing up all the inequalities from $i$ to $n-2$, we get
    \begin{align}
        & M_\varphi^i \norm{\lambda_i^{k+1}} \\
        \le{} & M_\varphi^{n-1} \norm{\lambda_{n-1}^{k+1}} + \sum_{j=i}^{n-2}M_f M_\varphi^j + \sum_{j=i}^{n-2}\rho_{j+1}M_\varphi^{j+1} \norm{x_{j+2}^{k+1} - x_{j+2}^k} + \sum_{j=i}^{n-2} \frac{M_\varphi^j}{\eta_{j+1}}\norm{x_{j+1}^{k+1} - x_{j+1}^k} \nonumber \\
        \le{} & \frac{M_\varphi\left(1-M_\varphi^{n-i}\right)}{1-M_\varphi} + \sum_{j=i}^{n-2}\rho_{j+1}M_\varphi^{j+1}\norm{x_{j+2}^{k+1} - x_{j+2}^k} + \sum_{j=i}^{n-1}\frac{M_\varphi^j}{\eta_{j+1}}\norm{x_{j+1}^{k+1} - x_{j+1}^k},
    \end{align}
    where the last inequality comes from the estimate~\eqref{eq:init_est}. Now multiply each side by $M_\varphi^{-i}$, we obtain the estimate for $\norm{\lambda_i^{k+1}}$ as 
    \begin{equation}
        \norm{\lambda_i^{k+1}} \le \frac{1-M_\varphi^{n-i}}{1-M_\varphi} M_f + \sum_{j=i}^{n-2} \rho_{j+1}M_\varphi^{j+1-i}\norm{x_{j+2}^{k+1} - x_{j+2}^k} + \sum_{j=i}^{n-1} \frac{M_\varphi^{j-i}}{\eta_{/j+1}}\norm{x_{j+1}^{k+1} - x_{j+1}^k},
    \end{equation}
    for $i=0, \ldots, n-1$. This is the desired result of this lemma.
\end{proof}

The above lemma enables us to conclude that the bound for $\norm{\lambda_i}$ depends only on $\rho_j$ for $j \ge i+1$, that is, $B_{i, j}$ does not depend on $\rho_i$ for $j=i$. This point will be made clear in later lemmas. 

The nest step is to estimate $\norm{\lambda_i^{k+1} - \lambda_i^k}$. The key idea is the same as in the proof of the above lemma. We will proceed with the estimation by an iterative argument on $i$. We have the following lemma. 
\begin{lemma} \label{lem:dual_diff}
    Let $\lambda^k = (\lambda_0^k, \ldots, \lambda_{n-1}^k)$ be the Lagrange multipliers generated by the proximal ADMM algorithm~\eqref{eq:admm}. Suppose that there exists a bounded set $S$ such that Assumption~\eqref{asm:smoothness} holds and $x^k, x^{k+1} \in S$, then we have
    \begin{equation}
        \norm{\lambda_i^{k+1} - \lambda_i^k}^2 \le \sum_{j=i}^{n-1} 2(n-i)C^2_{j,i} \norm{x_{j+1}^{k+1} - x_{j+1}^k}^2 + \sum_{j=i}^{n-1} 2(n-i)\tilde{C}^2_{j,i}\norm{x_{j+1}^k - x_{j+1}^{k-1}}^2,
    \end{equation}
    where 
    \begin{equation}\label{eq:const_1}
        C_{j,i} = \left\{\begin{aligned}
            & \frac{M_\varphi^{j-i} - M_\varphi^{n-i-1}}{1-M_\varphi} M_f L_\varphi + M_\varphi^{j-i} L_f && \\
            & \phantom{\frac{M_\varphi^{j-i} - M_\varphi^{n-i-1}}{1-M_\varphi} } + (2j - 2i + 1) \frac{M_\varphi^{j-i}}{\eta_{j+1}} + (2j - 2i - 1)\rho_j M_\varphi^{j-i}, && \text{ for } j > i, \\
            & \frac{1 - M_\varphi^{n-i-1}}{1 - M_\varphi} M_f L_\varphi + L_f + \frac{1}{\eta_{i+1}}, && \text{ for } j=i,
        \end{aligned} \right.
    \end{equation}
    and 
    \begin{equation}\label{eq:const_2}
        \tilde{C}_{j,i} = \left\{ \begin{aligned}
            &\frac{M_\varphi^{j-i}}{\eta_{j+1}} + \rho_j M_\varphi^{j-i}, && \text{ for } j > i, \\
            & \frac{1}{\eta_{j+1}}, && \text{ for } j = i,
        \end{aligned}\right.
    \end{equation}
    for $i = 0, \ldots, n-1$.
\end{lemma}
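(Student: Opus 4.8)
The plan is to run an iterative (downward) induction on the block index $i$, exactly parallel to the proof of Lemma~\ref{lem:dual_bound}, but now tracking the increments $\lambda_i^{k+1}-\lambda_i^k$ rather than $\lambda_i^{k+1}$ itself. For the base case $i=n-1$ I write the transformed optimality relation~\eqref{eq:fir_ord_n} at iteration $k$ and at iteration $k-1$ and subtract, obtaining
\begin{equation*}
\lambda_{n-1}^{k+1}-\lambda_{n-1}^k = -\bigl(\nabla f_n(x_n^{k+1})-\nabla f_n(x_n^k)\bigr) - \tfrac{1}{\eta_n}\bigl((x_n^{k+1}-x_n^k)-(x_n^k-x_n^{k-1})\bigr);
\end{equation*}
the local $L$-smoothness of $f_n$ on $S$ (Assumption~\ref{asm:smoothness}) and the triangle inequality then yield the claimed linear bound $\norm{\lambda_{n-1}^{k+1}-\lambda_{n-1}^k}\le (L_f+\tfrac{1}{\eta_n})\norm{x_n^{k+1}-x_n^k}+\tfrac{1}{\eta_n}\norm{x_n^k-x_n^{k-1}}$, i.e. with constants $C_{n-1,n-1}$, $\tilde C_{n-1,n-1}$.

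For the inductive step I subtract the iteration-$k$ and iteration-$(k-1)$ forms of~\eqref{eq:fir_ord}. The only delicate term is the difference $\nabla\varphi(x_{i+1}^{k+1})\trans\lambda_{i+1}^{k+1}-\nabla\varphi(x_{i+1}^{k})\trans\lambda_{i+1}^{k}$, which I split into a propagating part $\nabla\varphi(x_{i+1}^{k+1})\trans(\lambda_{i+1}^{k+1}-\lambda_{i+1}^k)$, contributing the factor $M_\varphi$ in front of $\norm{\lambda_{i+1}^{k+1}-\lambda_{i+1}^k}$ (to which the induction hypothesis applies), and a curvature part $\bigl(\nabla\varphi(x_{i+1}^{k+1})\trans-\nabla\varphi(x_{i+1}^{k})\trans\bigr)\lambda_{i+1}^k$, which Assumption~\ref{asm:smoothness} bounds by $L_\varphi\norm{x_{i+1}^{k+1}-x_{i+1}^k}\,\norm{\lambda_{i+1}^k}$; applying the dual bound~\eqref{eq:dual_bound} of Lemma~\ref{lem:dual_bound} to $\norm{\lambda_{i+1}^k}$ is precisely what produces the $M_fL_\varphi$ contribution. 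The $\nabla f_{i+1}$ difference is controlled by $L_f$, while the $\rho$- and proximal-difference terms are handled by the triangle inequality (splitting each into an $\norm{x_\bullet^{k+1}-x_\bullet^k}$ piece and an $\norm{x_\bullet^k-x_\bullet^{k-1}}$ piece), which is what keeps their coefficients free of $L_\varphi$. This gives a recursion of the form $\norm{\lambda_i^{k+1}-\lambda_i^k}\le M_\varphi\norm{\lambda_{i+1}^{k+1}-\lambda_{i+1}^k}+(\text{fresh primal increments attached to blocks }j\ge i)$, and unrolling it from $i$ up to $n-1$ (multiplying by the appropriate power of $M_\varphi$ at each step and summing, exactly as in Lemma~\ref{lem:dual_bound}) produces a linear estimate
\begin{equation*}
\norm{\lambda_i^{k+1}-\lambda_i^k}\le \sum_{j=i}^{n-1}C_{j,i}\norm{x_{j+1}^{k+1}-x_{j+1}^k}+\sum_{j=i}^{n-1}\tilde C_{j,i}\norm{x_{j+1}^k-x_{j+1}^{k-1}},
\end{equation*}
with the constants~\eqref{eq:const_1}--\eqref{eq:const_2}.

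Finally, squaring this estimate with the elementary inequality $\bigl(\sum_{\ell=1}^{m}a_\ell\bigr)^2\le m\sum_{\ell=1}^{m}a_\ell^2$ applied with $m=2(n-i)$ summands gives exactly the stated bound. I expect the main obstacle to be the bookkeeping in the unrolling step: one must track precisely the power $M_\varphi^{j-i}$ picked up by each increment and, more importantly, the number of times a given increment $\norm{x_{j+1}^{k+1}-x_{j+1}^k}$ is generated along the chain (once through the proximal and gradient terms at its own level $j$, once through the $\rho$-term at level $j-1$, and once for each curvature term encountered on the way up), which is the origin of the integer multipliers $2j-2i+1$ and $2j-2i-1$ in~\eqref{eq:const_1}. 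A minor but important point, already flagged after Lemma~\ref{lem:dual_bound}, is that $C_{j,i}$ must come out independent of $\rho_i$, so in the curvature term one should invoke Lemma~\ref{lem:dual_bound} in the form that exhibits $\norm{\lambda_{i+1}^k}$ as depending only on $\rho_j$ with $j\ge i+1$.
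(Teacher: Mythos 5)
Your overall architecture is the same as the paper's: subtract the transformed optimality relations at iterations $k$ and $k-1$, unroll the resulting recursion in $i$ by multiplying with powers of $M_\varphi$ and telescoping down from the $i=n-1$ base case, invoke Lemma~\ref{lem:dual_bound} to control the curvature term, and finally square with $\bigl(\sum_{\ell=1}^m a_\ell\bigr)^2\le m\sum_{\ell=1}^m a_\ell^2$ for $m=2(n-i)$. The gap is in the one genuinely delicate step, the curvature term. As written, you bound it by $L_\varphi\norm{x_{i+1}^{k+1}-x_{i+1}^k}\,\norm{\lambda_{i+1}^k}$ and then substitute the dual bound for $\norm{\lambda_{i+1}^k}$; but that dual bound is a constant plus terms $B_{q,i+1}\norm{x_{q+1}^{k}-x_{q+1}^{k-1}}$, so the substitution produces bilinear products $\norm{x_{i+1}^{k+1}-x_{i+1}^k}\cdot\norm{x_{q+1}^{k}-x_{q+1}^{k-1}}$ that cannot be absorbed into the claimed bound, which is linear in the squared increments with constant coefficients (and you cannot bound $\norm{x^{k}-x^{k-1}}$ by a constant, since $x^{k-1}\in S$ is not assumed). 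The paper's proof avoids this by using the two-regime estimate $\norm{\nabla\varphi(x_{j+1}^{k+1})-\nabla\varphi(x_{j+1}^k)}\le \min\bigl(L_\varphi\norm{x_{j+1}^{k+1}-x_{j+1}^k},\,2M_\varphi\bigr)$: the factor $L_\varphi\norm{\cdot}$ is paired only with the constant $M_f$ part of the dual bound (giving the $M_fL_\varphi$ contribution), while the increment part is paired with $2M_\varphi$; your proposal never mentions this device, and without it the argument stalls.

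There is also an internal inconsistency in your choice of splitting. You attach the curvature part to $\lambda_{i+1}^{k}$ and the propagating part to $\nabla\varphi(x_{i+1}^{k+1})$, whereas the paper writes $\nabla\varphi(x_{i+1}^{k})\trans(\lambda_{i+1}^{k+1}-\lambda_{i+1}^{k})+\bigl(\nabla\varphi(x_{i+1}^{k+1})-\nabla\varphi(x_{i+1}^{k})\bigr)\trans\lambda_{i+1}^{k+1}$. This matters: the dual bound for $\lambda_{i+1}^{k+1}$ contains increments $\norm{x^{k+1}-x^{k}}$, so the extra copies generated by each curvature level attach to the $\norm{x^{k+1}-x^k}$ coefficients and produce exactly the factors $(2j-2i+1)$ and $(2j-2i-1)$ in $C_{j,i}$, with $\tilde C_{j,i}$ left free of them, as stated. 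With your split, even after the $2M_\varphi$ fix, the dual bound for $\lambda_{i+1}^{k}$ contains $\norm{x^{k}-x^{k-1}}$ increments, so the counting factors land on the $\tilde C$ side instead; the resulting pair of constants is neither the stated one nor dominated by it, so the lemma as stated does not follow (and this contradicts your own accounting, which places the curvature-generated copies on $\norm{x_{j+1}^{k+1}-x_{j+1}^k}$). Both issues are repairable by adopting the paper's split and the two-regime bound on the gradient difference, but as written the proposal does not deliver the constants~\eqref{eq:const_1}--\eqref{eq:const_2}.
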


\begin{proof}
    As in the proof of Lemma~\ref{lem:dual_bound}, we have equations~\eqref{eq:fir_ord} and~\eqref{eq:fir_ord_n}. Replacing $k$ by $k-1$ in these equations, we obtain
    \begin{multline}\label{eq:fir_ord_pre}
        \lambda_i^k = \nabla \varphi(x_{i+1}^k)^T \lambda_{i+1}^k - \nabla f_{i+1}(x_{i+1}^k) \\- \rho_{i+1}\nabla \varphi(x_{i+1}^k)^T\left(x_{i+2}^k - x_{i+2}^{k-1}\right) - \frac{1}{\eta_{i+1}}\left(x_{i+1}^k - x_{i+1}^{k-1}\right),
    \end{multline}
    for $i=0, \ldots, n-2$, and 
    \begin{equation} \label{eq:fir_ord_n_pre}
        \lambda_{n-1}^k = -\nabla f_n(x_n^k) - \frac{1}{\eta_n}\left(x_n^k - x_n^{k-1}\right).
    \end{equation}
    Subtracting~\eqref{eq:fir_ord} by~\eqref{eq:fir_ord_pre}, we can show that 
    \begin{align}
        & \lambda_i^{k+1} - \lambda_i^k \nonumber \\
        ={} & \nabla \varphi(x_{i+1}^k)^T \left(\lambda_{i+1}^{k+1} - \lambda_{i+1}^k\right) + \left(\nabla\varphi(x_{i+1}^{k+1}) - \nabla \varphi(x_{i+1}^k) \right)^T \lambda_{i+1}^{k+1} \nonumber \\
        & - \left( \nabla f_{i+1}(x_{i+1}^{k+1}) - \nabla f_{i+1}(x_{i+1}^k) \right) - \rho_{i+1} \nabla\varphi(x_{i+1}^{k+1})^T \left(x_{i+2}^{k+1} - x_{i+2}^k\right) \nonumber \\
        & + \rho_{i+1}\nabla\varphi(x_{i+1}^k)^T \left(x_{i+2}^k - x_{i+2}^{k-1} \right) - \frac{1}{\eta_{i+1}}\left(x_{i+1}^{k+1} - x_{i+1}^k\right) + \frac{1}{\eta_{i+1}}\left( x_{i+1}^k - x_{i+1}^{k-1}\right),
    \end{align}
    for $i=0, \ldots, n-2$. Also subtracting~\eqref{eq:fir_ord_n} by~\eqref{eq:fir_ord_n_pre}, we get 
    \begin{equation}
        \lambda_{n-1}^{k+1} - \lambda_{n-1}^k = -\left(\nabla f_n(x_n^{k+1}) - \nabla f_n(x_n^k) \right) - \frac{1}{\eta_n}\left(x_n^{k+1} - x_n^k\right) + \frac{1}{\eta_n}\left(x_n^k - x_n^{k-1}\right).
    \end{equation}
    From Assumption~\ref{asm:smoothness}, the last equation can be estimated as 
    \begin{align} \label{eq:diff_est_n}
        \norm{\lambda_{n-1}^{k+1} - \lambda_{n-1}^k} \le{} & L_f \norm{x_n^{k+1} - x_n^k} + \frac{1}{\eta_n}\norm{x_n^{k+1} - x_n^k} + \frac{1}{\eta_n}\norm{x_n^k - x_n^{k-1}} \nonumber \\
        ={} & \left( L_f + \frac{1}{\eta_n} \right) \norm{x_n^{k+1} - x_n^k} + \frac{1}{\eta_n} \norm{x_n^k - x_n^{k-1}}.
    \end{align}
    This is the key inequality for the following analysis. From~\eqref{eq:fir_ord_pre}, we obtain
    \begin{align}
        \norm{\lambda_i^{k+1} - \lambda_i^k} \le{} & M_\varphi\norm{\lambda_{i+1}^{k+1} - \lambda_{i+1}^k} + \norm{\nabla\varphi(x_{i+1}^{k+1}) - \nabla\varphi(x_{i+1}^k)}\cdot \norm{\lambda_{i+1}^{k+1}} \nonumber \\
        & + L_f \norm{x_{i+1}^{k+1} - x_{i+1}^k} + \rho_{i+1} M_\varphi \norm{x_{i+2}^{k+1} - x_{i+2}^k} \nonumber \\
        & + \rho_{i+1}M_\varphi \norm{x_{i+2}^k - x_{i+2}^{k-1}} + \frac{1}{\eta_{i+1}}\norm{x_{i+1}^{k+1} - x_{i+1}^k} + \frac{1}{\eta_{i+1}}\norm{x_{i+1}^k - x_{i+1}^{k-1}} \nonumber \\
        ={} & M_\varphi \norm{\lambda_{i+1}^{k+1} - \lambda_{i+1}^k} + \norm{\nabla\varphi(x_{i+1}^{k+1}) - \nabla\varphi(x_{i+1}^k}\cdot \norm{\lambda_{i+1}^{k+1}} \nonumber \\
        & + \left( L_f + \frac{1}{\eta_{i+1}}\right) \norm{x_{i+1}^{k+1} - x_{i+1}^k} + \frac{1}{\eta_{i+1}}\norm{x_{i+1}^k - x_{i+1}^{k-1}} \nonumber \\
        & + \rho_{i+1}M_\varphi \norm{x_{i+2}^{k+1} - x_{i+2}^k} + \rho_{i+1}M_\varphi \norm{x_{i+2}^k - x_{i+2}^{k-1}}
    \end{align}
    where the constants $M_\varphi$ and $L_f$ are the ones in Assumption~\ref{asm:smoothness}. Now, multiplying each side by $M_\varphi^i$, the inequality becomes 
    \begin{align}
        M_\varphi^i \norm{\lambda_i^{k+1} - \lambda_i^k} \le{} & M_\varphi^{i+1}\norm{\lambda_{i+1}^{k+1} - \lambda_{i+1}^k} + M_\varphi^i \norm{\nabla\varphi(x_{i+1}^{k+1}) - \nabla\varphi(x_{i+1}^k)}\cdot \norm{\lambda_{i+1}^{k+1}} \nonumber \\
        & + M_\varphi^i\left(L_f + \frac{1}{\eta_{i+1}}\right) \norm{x_{i+1}^{k+1} - x_{i+1}^k} + \frac{M_\varphi^i}{\eta_{i+1}}\norm{x_{i+1}^k - x_{i+1}^{k-1}} \nonumber \\
        & + \rho_{i+1} M_\varphi^{i+1} \norm{x_{i+2}^{k+1} - x_{i+2}^k} + \rho_{i+1} M_\varphi^{i+1} \norm{x_{i+2}^k - x_{i+2}^{k-1}}.
    \end{align}
    Summing up all the inequalities from $i$ to $n-2$, we get 
    \begin{align}\label{eq:mid_est}
        M_\varphi^i \norm{\lambda_i^{k+1} - \lambda_i^k} \le{} & M_\varphi^{n-1}\norm{\lambda_{n-1}^{k+1} - \lambda_{n-1}^k} + \sum_{j=i}^{n-2}M_\varphi^j \norm{\nabla\varphi(x_{j+1}^{k+1}) - \nabla \varphi(x_{j+1}^k)}\cdot \norm{\lambda_{j+1}^{k+1}} \nonumber \\
        & + \sum_{j=i}^{n-2} M_\varphi^j\left(L_f + \frac{1}{\eta_{j+1}}\right) \norm{x_{j+1}^{k+1} - x_{j+1}^k} + \sum_{j=i}^{n-2}\frac{M_\varphi^j}{\eta_{j+1}}\norm{x_{j+1}^k - x_{j+1}^{k-1}} \nonumber \\
        & + \sum_{j=i}^{n-2} \rho_{j+1}M_\varphi^{j+1}\norm{x_{j+2}^{k+1} - x_{j+2}^k} + \sum_{j=i}^{n-2}\rho_{j+1}M_\varphi^{j+1}\norm{x_{j+2}^k - x_{j+2}^{k-1}} \nonumber \\
        \le{} & \sum_{j=i}^{n-2} M_\varphi^j \norm{\nabla \varphi(x_{j+1}^{k+1}) - \nabla\varphi(x_{j+1}^k)}\cdot \norm{\lambda_{j+1}^{k+1}} \nonumber \\
        & + \sum_{j=i}^{n-1} M_\varphi^j\left(L_f + \frac{1}{\eta_{j+1}}\right) \norm{x_{j+1}^{k+1} - x_{j+1}^k} + \sum_{j-i}^{n-1}\frac{M_\varphi^j}{\eta_{j+1}}\norm{x_{j+1}^k - x_{j+1}^{k-1}} \nonumber \\
        & + \sum_{j=i}^{n-2} \rho_{j+1}M_\varphi^{j+1} \norm{x_{j+2}^{k+1} - x_{j+2}^k} + \sum_{j=i}^{n-2} \rho_{j+1}M_\varphi^{j+1}\norm{x_{j+2}^k - x_{j+2}^{k-1}},
    \end{align}
    where the last inequality is due to~\eqref{eq:diff_est_n}. In order to continue our estimation, an interlude for the estimate of the term $\sum_{j=i}^{n-2} M_\varphi^j \norm{\nabla \varphi(x_{j+1}^{k+1}) - \nabla\varphi(x_{j+1}^k)}\cdot \norm{\lambda_{j+1}^{k+1}}$ is needed. From Lemma~\ref{lem:dual_bound}, we have that 
    \begin{align}\label{eq:middle_est}
        & \sum_{j=i}^{n-2} M_\varphi^j \norm{\nabla\varphi(x_{j+1}^{k+1}) - \nabla\varphi(x_{j+1}^k)}\cdot \norm{\lambda_{j+1}^{k+1}} \nonumber \\
        \le{} & \sum_{j=i}^{n-2} M_\varphi^j \norm{\nabla\varphi(x_{j+1}^{k+1}) - \nabla\varphi(x_{j+1}^k)} \cdot \left( \frac{1-M_\varphi^{n-j-1}}{1-M_\varphi} M_f + \sum_{q=j+1}^{n-2} \rho_{q+1} M_\varphi^{q-j}\norm{x_{q+2}^{k+1} - x_{q+2}^k} \right. \nonumber\\
        & \left. \phantom{\sum_{j=i}^{n-2} M_\varphi^j \norm{\nabla\varphi(x_{j+1}^{k+1}) - \nabla\varphi(x_{j+1}^k)} \cdot ( \frac{1-M_\varphi^{n-j-1}}{1-M_\varphi} M_f} + \sum_{q=j+1}^{n-1} \frac{M_\varphi^{q-j-1}}{\eta_{q+1}}\norm{x_{q+1}^{k+1} - x_{q+1}^k} \right) \nonumber \\
        \le{} & \sum_{j=i}^{n-2}M_\varphi^j \frac{1 - M_\varphi^{n-j-1}}{1-M_\varphi} M_f L_\varphi \norm{x_{j+1}^{k+1} - x_{j+1}^k} + \sum_{j=i}^{n-2} M_\varphi^j\cdot 2M_\varphi \sum_{q=j+1}^{n-2}\rho_{q+1}M_\varphi^{q-j}\norm{x_{q+2}^{k+1} - x_{q+2}^k} \nonumber \\
        & + \sum_{j=i}^{n-2} M_\varphi^j \cdot 2M_\varphi \sum_{q=j+1}^{n-1} \frac{M_\varphi^{q-j-1}}{\eta_{q+1}}\norm{x_{q+1}^{k+1} - x_{q+1}^k} \nonumber \\
        ={} & \sum_{j=i}^{n-2} \frac{M_\varphi^j - M_\varphi^{n-1}}{1-M_\varphi} M_f L_\varphi \norm{x_{j+1}^{k+1} - x_{j+1}^k} + 2\sum_{q=i+1}^{n-2}\sum_{j=i}^{q-1}\rho_{q+1} M_\varphi^{q+1} \norm{x_{q+2}^{k+1} - x_{q+2}^k} \nonumber \\
        & + 2 \sum_{q=i+1}^{n-1} \sum_{j=i}^{q-1} \frac{M_\varphi^q}{\eta_{q+1}} \norm{x_{q+1}^{k+1} - x_{q+1}^k} \nonumber \\
        ={} & \sum_{j=i}^{n-2} \frac{M_\varphi^j - M_\varphi^{n-1}}{1-M_\varphi} M_f L_\varphi \norm{x_{j+1}^{k+1} - x_{j+1}^k} + \sum_{q=i+1}^{n-2}2(q-i) \rho_{q+1}M_\varphi^{q+1}\norm{x_{q+2}^{k+1} - x_{q+2}^k} \nonumber \\
        & + \sum_{q=i+1}^{n-1} 2(q-i) \frac{M_\varphi^q}{\eta_{q+1}} \norm{x_{q+1}^{k+1} - x_{q+1}^k}.
    \end{align}
    The second inequality comes from Assumption~\ref{asm:smoothness} and the estimates 
    \begin{equation}
        \norm{\nabla\varphi(x_{q+1}^{k+1}) - \nabla\varphi(x_{q+1}^k)} \le L_\varphi \norm{x_{q+1}^{k+1} - x_{q+1}^k}, \text{ and } \norm{\nabla\varphi(x_{q+1}^{k+1}) - \nabla\varphi(x_{q+1}^k)} \le 2 M_\varphi.
    \end{equation}
    Now, we return to the estimation~\eqref{eq:mid_est}. Combining the estimate~\eqref{eq:middle_est} with~\eqref{eq:mid_est}, we obtain 
    \begin{align}
        & M_\varphi^i \norm{\lambda_i^{k+1} - \lambda_i^k} \nonumber \\
        \le{} & \sum_{j=i}^{n-2} \frac{M_\varphi^j - M_\varphi^{n-1}}{1 - M_\varphi} M_f L_\varphi \norm{x_{j+1}^{k+1} - x_{j+1}^k} + \sum_{j=i+1}^{n-2} 2(j-i) \rho_{j+1}M_\varphi^{j+1}\norm{x_{j+2}^{k+1} - x_{j+2}^k} \nonumber \\
        & + \sum_{j=i+1}^{n-1} 2(j-i) \frac{M_\varphi^j}{\eta_{j+1}}\norm{x_{j+1}^{k+1} - x_{j+1}^k} + \sum_{j=i}^{n-1} M_\varphi^j \left(L_f + \frac{1}{\eta_{j+1}}\right) \norm{x_{j+1}^{k+1} - x_{j+1}^k} \nonumber \\
        & + \sum_{j=i}^{n-1}\frac{M_\varphi^j}{\eta_{j+1}}\norm{x_{j+1}^k - x_{j+1}^{k-1}} + \sum_{j=i}^{n-2}\rho_{j+1}M_\varphi^{j+1}\norm{x_{j+2}^{k+1} - x_{j+2}^k} + \sum_{j=i}^{n-2}\rho_{j+1} M_\varphi^{j+1}\norm{x_{j+2}^k - x_{j+2}^{k-1}} \nonumber \\
        ={} & \sum_{j=i}^{n-1} \left( \frac{M_\varphi^j - M_\varphi^{n-1}}{1-M-\varphi} M_f L_\varphi + M_\varphi^j L_f + (2j - 2i + 1)\frac{M_\varphi^j}{\eta_{j+1}} \right) \norm{x_{j+1}^{k+1} - x_{j+1}^k} \nonumber \\
        & + \sum_{j=i}^{n-1}\frac{M_\varphi^j}{\eta_{j+1}}\norm{x_{j+1}^k - x_{j+1}^{k-1}} + \sum_{j=i}^{n-2}(2j - 2i + 1)\rho_{j+1}M_\varphi^{j+1}\norm{x_{j+2}^{k+1} - x_{j+2}^k} \nonumber \\
        & + \sum_{j=i}^{n-2}\rho_{j+1}M_\varphi^{j+1}\norm{x_{j+2}^k - x_{j+2}^{k-1}}.
    \end{align}
    Finally, multiplying each side by $M_\varphi^{-i}$, we have that 
    \begin{align}
        & \norm{\lambda_i^{k+1} - \lambda_i^k} \nonumber \\
        \le{} & \sum_{j=i}^{n-1}\left( \frac{M_\varphi^{j-i} - M_\varphi^{n-i-1}}{1 - M_\varphi} M_f L_\varphi + M_\varphi^{j-i} L_f + (2j - 2i + 1)\frac{M_\varphi^{j-i}}{\eta_{j+1}} \right) \norm{x_{j+1}^{k+1} - x_{j+1}^k} \nonumber \\
        & + \sum_{j=i}^{n-1} \frac{M_\varphi^{j-i}}{\eta_{j+1}}\norm{x_{j+1}^k - x_{j+1}^{k-1}} + \sum_{j=i}^{n-2}(2j - 2i + 1) \rho_{j+1} M_\varphi^{j-i+1} \norm{x_{j+2}^{k+1} - x_{j+2}^k} \nonumber \\
        & + \sum_{j=i}^{n-2} \rho_{j+1} M_\varphi^{j-i+1}\norm{x_{j+2}^k - x_{j+2}^{k-1}} \nonumber \\
        ={} & \sum_{j=i+1}^{n-1} \Bigg( \frac{M_\varphi^{j-i} - M_\varphi^{n-i-1}}{1-M-\varphi} M_f L_\varphi + M_\varphi^{j-i} L_f \nonumber \\
        & \phantom{\sum_{j=i+1}^{n-1} \Bigg( \frac{M_\varphi^{j-i} - M_\varphi^{n-i-1}}{1-M-\varphi} M_f} + (2j - 2i + 1)\frac{M_\varphi^{j-i}}{\eta_{j+1}} + (2j - 2i - 1)\rho_j M_\varphi^{j-i} \Bigg) \norm{x_{j+1}^{k+1} - x_{j+1}^k} \nonumber \\
        & + \left( \frac{1 - M_\varphi^{n-i-1}}{1 - M_\varphi} M_f L_\varphi + L_f + \frac{1}{\eta_{i+1}} \right) \norm{x_{i+1}^{k+1} - x_{i+1}^k} \nonumber \\
        & + \sum_{j=i+1}^{n-1} \left( \frac{M_\varphi^{j-i}}{\eta_{j+1}} + \rho_j M_\varphi^{j-i} \right) \norm{x_{j+1}^k - x_{j+1}^{k-1}} + \frac{1}{\eta_{i+1}}\norm{x_{i+1}^k - x_{i+1}{k-1}} \nonumber \\
        ={} & \sum_{j=i}^{n-1} C_{j,i} \norm{x_{j+1}^{k+1} - x_{j+1}^k} + \sum_{j=i}^{n-1} \tilde{C}_{j,i}\norm{x_{j+1}^k - x_{j+1}^{k-1}}
    \end{align}
    for $i = 0, \ldots, n-1$. Here the constants $C_{j,i}$ and $C_{j,i}$ are defined by~\eqref{eq:const_1} and~\eqref{eq:const_2}. Squaring both sides of the above inequality and using the fact $\left( \sum_{i=1}^n a_i \right)^2 \le n \left(\sum_{i=1}^n a_i^2 \right)$, we obtain the desired result.
\end{proof}

The above lemma is the key idea in our proof of the convergence of the proximal ADMM algorithm~\eqref{eq:admm}, because it provides an upper bound for the change of the multipliers in terms of the changes of the variables, which in turn enables us to assign the parameters properly so that the Lyapunov function~\eqref{eq:lyapunov} decreases for each iteration $k$. From Lemma~\ref{lem:dual_diff}, we have that 
\begin{align}
    & \sum_{i=0}^{n-1}\frac{1}{\rho_i}\norm{\lambda_i^{k+1} - \lambda_i^k}^2 \nonumber \\
    \le{} & \sum_{i=0}^{n-1}\frac{1}{\rho_i}\left( \sum_{j=i}^{n-1} 2(n-i)C_{j,i}^2\norm{x_{j+1}^{k+1} - x_{j+1}^k}^2 + 2(n-i)\tilde{C}_{j,i}^2\norm{x_{j+1}^k - x_{j+1}^{k-1}}^2 \right) \nonumber \\
    ={} & \sum_{j=0}^{n-1}\left( \sum_{i=0}^j \frac{2(n-i)}{\rho_i}C_{j,i}^2 \right) \norm{x_{j+1}^{k+1} - x_{j+1}^k}^2 + \left( \sum_{i=0}^j \frac{2(n-i)}{\rho_i}\tilde{C}_{j,i}^2 \right) \norm{x_{j+1}^k - x_{j+1}^{k-1}}^2 \nonumber \\
    ={} & \sum_{i=1}^n \left(\sum_{j=0}^{i-1} \frac{2(n-j)}{\rho_j}C_{i-1, j}^2 \right) \norm{x_i^{k+1} - x_i^k}^2 + \left(\sum_{j=0}^{i-1} \frac{2(n-j)}{\rho_j}\tilde{C}_{i-1,j}^2 \right) \norm{x_i^k - x_i^{k-1}}^2.
\end{align}
Substituting this inequality into~\eqref{eq:lya_diff}, the change in the Lyapunov function for one step is upper bounded by 
\begin{align} \label{eq:lya_diff_fin}
    E(k+1) - E(k) \le{} & \sum_{i=1}^n \left(\sum_{j=0}^{i-1} \frac{2(n-j)}{\rho_j}C_{i-1, j}^2  - \frac{1}{4\eta_i}\right) \norm{x_i^{k+1} - x_i^k}^2 \nonumber \\
    & + \sum_{i=1}^n \left(\sum_{j=0}^{i-1} \frac{2(n-j)}{\rho_j}\tilde{C}_{i-1,j}^2 - \frac{1}{4\eta_i} \right) \norm{x_i^k - x_i^{k-1}}^2.
\end{align}
In order to have the decreasing property of the Lyapunov function, we need the coefficients of the right-hand side to be negative. Because of the truth $0 < \tilde{C}_{j,i} \le C_{j,i}$, we only need 
\begin{equation}\label{eq:para_con}
    \sum_{j=0}^{i-1}\frac{2(n-j)}{\rho_j}C_{i-1, j}^2 < \frac{1}{4\eta_i}
\end{equation}
to hold, for $i=1, \ldots, n$. The fact that the constant $C_{j,i}$ defined by~\eqref{eq:const_1} does not depend on $\rho_j$ for $j=i$ plays a vital role, which enables us to set the parameters successively to satisfy the above condition~\eqref{eq:para_con}. 

\textbf{A procedure to choose the parameters.} The program to set the parameters is as follows: First, we can choose arbitrarily $\eta_i$, for $i=0,\ldots, n$, then consider the diagram 
\begin{equation}
    \begin{matrix}
        \frac{2n}{\rho_0}C_{0,0}^2 & & & & &\\
        \frac{2n}{\rho_0}C_{1,0}^2 & & \frac{2(n-1)}{\rho_1}C_{1, 1}^2 & & &\\
        \vdots & & \cdots & \ddots & &\\
        \frac{2n}{\rho_0}C_{n-2, 0}^2 & & \cdots &  & \frac{4}{\rho_{n-2}}C_{n-2,n-2}^2 & \\
        \frac{2n}{\rho_0}C_{n-1, 0}^2 & & \cdots & & \frac{4}{\rho_{n-2}}C_{n-1, n-2}^2 & \frac{2}{\rho_{n-1}}C_{n-1,n-1}^2
    \end{matrix}
\end{equation}
By definition~\eqref{eq:const_1}, the quantity $C_{j, i}$ depends only on $\rho_j$ for $i < j$ and is constant for $i = j$. Our aim is to set the parameters $\rho_j$ for $j=0, \ldots, n-1$ such that the sum of the $i$-th line is less than $\frac{1}{4\eta_i}$. We start from the bottom line. First, we set $\rho_{n-1}$ large enough such that $\frac{2}{\rho_{n-1}}C^2_{n-1,n-1} < \frac{1}{n}\cdot \frac{1}{4\eta_n}$ and then fix $\rho_{n-1}$. Now the remaining $C_{n-1,0}, \ldots, C_{n-1,n-2}$ becomes fixed. We set $\rho_0, \ldots, \rho_{n-2}$ large enough such that $\sum_{j=0}^{n-2} \frac{2(n-j)}{\rho_j} C^2_{n-1, j} < \frac{n-1}{n}\cdot \frac{1}{4\eta_n}$. In this way, we obtain parameters $\rho_0, \ldots, \rho_{n-1}$ such that the sum of the bottom line is less than $\frac{1}{4\eta_n}$. We fix the parameter $\rho_{n-1}$ and go to the second line from the bottom. We proceed in the same way: first, choose $\rho_{n-2}$ such that $\frac{4}{\rho_{n-2}}C^2_{n-2, n-2} < \frac{1}{n-1}\cdot \frac{1}{4\eta_{n-1}}$ and then choose $\rho_0, \ldots, \rho_{n-3}$ such that $\sum_{j=0}^{n-3} \frac{2(n-j)}{\rho_j}C^2_{n-2, j} < \frac{n-2}{n-1}\cdot \frac{1}{4\eta_{n-1}}$ but notice that when we choose the parameters $\rho_0, \ldots, \rho_{n-2}$ we also require them to be larger than the ones we have from the last procedure. Then the sum of the bottom line is even smaller, and thus still satisfies the desired inequality. Now we have a fixed $\rho_{n-2}$. By successively performing this procedure, we fixed all parameters $\rho_j$ for $j=0, \ldots, n-1$ and the inequalities~\eqref{eq:para_con} are satisfied.

The only thing left to show is that the sequence of primal variables generated by the proximal ADMM algorithm stay in a bounded set, which is the following lemma.

\begin{lemma} \label{lem:lya_des}
    Let $x^k = (x_0^k, \ldots, x_n^k)$ and $\lambda^k = (\lambda_0^k, \lambda_{n-1}^k)$ be the primal and dual variables generated by the proximal ADMM algorithm~\eqref{eq:admm}. Suppose that the functions $f_i$ and $\varphi$ satisfy Assumption~\ref{asm:level_set} and there is a constant $M>0$, such that the initial point $x^0$ satisfies $\norm{x_{i+1}^0 - \varphi(x_i^0)}^2 \le M/\rho_i$, for $i=0,\ldots,n-1$. Define 
    \begin{equation}
        \bar{\alpha} = \sum_{i=0}^n f_i(x_i^0) + \sum_{i=0}^{n-1} \frac{5\norm{\lambda_i^0}^2}{4\rho_i^0} + n(M+3).
    \end{equation}Suppose that Assumption~\ref{asm:smoothness} holds with the level set $S_{\bar{\alpha}}$ defined by~\eqref{eq:level_set}. In addition, assume that the parameters satisfy~\eqref{eq:para_con} and 
    \begin{equation}\label{eq:assum}
        \rho_i > \max\left\{2\rho_i^0,\ \left(\frac{1-M_\varphi^{n-i}}{1-M_\varphi}M_f + \sum_{j=i}^{n-1}B_{j,i} D_{\bar{\alpha},j+1} \right)^2\right\},
    \end{equation}
    for $i=0,\ldots, n-1$, where $\rho_i^0$ is the constant in Assumption~\ref{asm:level_set} and $B_{j,i}$ is defined by~\eqref{eq:dual_const}. The constants $D_{\bar{\alpha},i}$ are defined as 
    \begin{equation}\label{eq:def_bound}
        D_{\bar{\alpha},i} := \max\left\{ \norm{x_i - y_i} \Big|\ x, y \in S_{\bar{\alpha}},\ x=(x_0,\ldots,x_n),\ y=(y_0, \ldots, y_n) \right\}.
    \end{equation} 
    Then the sequence 
    \begin{equation}
        x^k \in S_{\bar{\alpha}} \quad\text{ and }\quad \norm{\lambda_i^k} \le \frac{1-M_\varphi^{n-i}}{1-M_\varphi} + \sum_{j=i}^{n-1}B_{j,i}D_{\bar{\alpha},j+1}, \text{ for } i=0,\ldots, n-1,
    \end{equation}
    for all $k \ge 1$. Furthermore, the Lyapunov function is monotone decreasing and the following inequality holds for any $k \ge 1$.
    \begin{equation}
        E(k+1) - E(k) \le - \sum_{i=0}^n \left(c_i \norm{x_i^{k+1} - x_i^k}^2 + \tilde{c}_i \norm{x_i^k - x_i^{k-1}}^2 \right),
    \end{equation}
    where 
    \begin{equation} \label{eq:const_lya}
        c_i = \frac{1}{4\eta_i} - \sum_{j=0}^{i-1} \frac{2(n-j)}{\rho_j}C_{i-1, j}^2 \quad \text{ and } \quad \tilde{c}_i = \frac{1}{4\eta_i} - \sum_{j=0}^{i-1} \frac{2(n-j)}{\rho_j}\tilde{C}_{i-1, j}^2.
    \end{equation}
\end{lemma}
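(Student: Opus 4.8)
The plan is to prove the three assertions — $x^k\in S_{\bar\alpha}$, the bound on $\norm{\lambda_i^k}$, and the monotone decrease of $E$ with the displayed quantitative estimate — simultaneously, by induction on $k$. Throughout I would write $\Lambda_i:=\frac{1-M_\varphi^{n-i}}{1-M_\varphi}M_f+\sum_{j=i}^{n-1}B_{j,i}D_{\bar\alpha,j+1}$ for the asserted dual bound, so that~\eqref{eq:assum} reads precisely $\rho_i>\max\{2\rho_i^0,\Lambda_i^2\}$. Since the hypotheses fix $S_{\bar\alpha}$ as the compact set in Assumption~\ref{asm:smoothness}, the constants $M_f,L_f,M_\varphi,L_\varphi,C_\varphi$ and hence $B_{j,i},C_{j,i},\tilde{C}_{j,i}$ are frozen once and for all; Lemmas~\ref{lem:dual_bound} and~\ref{lem:dual_diff} apply as soon as the iterates they reference lie in $S_{\bar\alpha}$, and $\norm{x_i-y_i}\le D_{\bar\alpha,i}$ for $x,y\in S_{\bar\alpha}$ by~\eqref{eq:def_bound}.

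\textbf{Base case.} First I would check $x^0\in S_{\bar\alpha}$: the assumption $\norm{x_{i+1}^0-\varphi(x_i^0)}^2\le M/\rho_i$ with $\rho_i>2\rho_i^0$ gives $\frac{\rho_i^0}{2}\norm{x_{i+1}^0-\varphi(x_i^0)}^2<M/4$, hence $\sum f_i(x_i^0)+\sum\frac{\rho_i^0}{2}\norm{x_{i+1}^0-\varphi(x_i^0)}^2\le\bar\alpha$. Then, applying the primal descent~\eqref{eq:primal_dec} at $k=0$, expanding $L_\rho$ through~\eqref{eq:aug_lag}, completing the square in the cross terms $\inner{\lambda_i^0,\cdot}$ (legitimate since $\rho_i-\rho_i^0>\rho_i^0>0$), and using the dual update together with $\rho_i>\Lambda_i^2$, a short computation bounds both $\sum f_i(x_i^1)+\sum\frac{\rho_i^0}{2}\norm{x_{i+1}^1-\varphi(x_i^1)}^2$ and $E(1)$ by $\bar\alpha$; this is exactly where the factor $\tfrac54$ and the slack $n(M+3)$ in $\bar\alpha$ are calibrated to make the remainders fit. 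Thus $x^1\in S_{\bar\alpha}$ and $E(1)\le\bar\alpha$, and Lemma~\ref{lem:dual_bound} in the form~\eqref{eq:dual_bound}, together with $\norm{x_{j+1}^1-x_{j+1}^0}\le D_{\bar\alpha,j+1}$, gives $\norm{\lambda_i^1}\le\Lambda_i$.

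\textbf{Inductive step.} Assume $x^j\in S_{\bar\alpha}$ and $\norm{\lambda_i^j}\le\Lambda_i$ for all $j\le k$, and $E(k)\le\bar\alpha$. The crux is to establish $x^{k+1}\in S_{\bar\alpha}$ \emph{before} invoking Lemma~\ref{lem:dual_diff}, thereby breaking the apparent circularity between ``iterates in $S_{\bar\alpha}$'' (needed for the smoothness constants) and ``$E$ decreasing'' (the mechanism keeping iterates bounded). For this I would use only the unconditional inequality~\eqref{eq:primal_dec}: $L_\rho(x^{k+1},\lambda^k)\le L_\rho(x^k,\lambda^k)\le E(k)\le\bar\alpha$. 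Writing $r_i=x_{i+1}^{k+1}-\varphi(x_i^{k+1})$ and expanding via~\eqref{eq:aug_lag}, $\sum f_i(x_i^{k+1})+\sum\bigl(\tfrac{\rho_i}{2}\norm{r_i}^2+\inner{\lambda_i^k,r_i}\bigr)\le\bar\alpha$; splitting $\rho_i=\rho_i^0+(\rho_i-\rho_i^0)$ and completing the square gives $\tfrac{\rho_i-\rho_i^0}{2}\norm{r_i}^2+\inner{\lambda_i^k,r_i}\ge-\tfrac{\norm{\lambda_i^k}^2}{2(\rho_i-\rho_i^0)}$, so $\sum f_i(x_i^{k+1})+\sum\tfrac{\rho_i^0}{2}\norm{r_i}^2\le\bar\alpha+\sum\tfrac{\norm{\lambda_i^k}^2}{2(\rho_i-\rho_i^0)}$, and the last sum is absorbed using $\norm{\lambda_i^k}\le\Lambda_i$, $\rho_i>\Lambda_i^2$ and $\rho_i>2\rho_i^0$ (so $\rho_i-\rho_i^0>\rho_i/2$) — again inside the slack of $\bar\alpha$ — which yields $x^{k+1}\in S_{\bar\alpha}$. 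Now $x^k,x^{k+1}\in S_{\bar\alpha}$, so Lemma~\ref{lem:dual_diff} is available; inserting its bound on $\norm{\lambda_i^{k+1}-\lambda_i^k}^2$ into~\eqref{eq:lya_diff} exactly as in the derivation of~\eqref{eq:lya_diff_fin}, and using~\eqref{eq:para_con} together with $0<\tilde{C}_{j,i}\le C_{j,i}$ so that both $c_i,\tilde{c}_i$ from~\eqref{eq:const_lya} are positive, gives $E(k+1)-E(k)\le-\sum_{i=0}^n\bigl(c_i\norm{x_i^{k+1}-x_i^k}^2+\tilde{c}_i\norm{x_i^k-x_i^{k-1}}^2\bigr)\le0$, hence $E(k+1)\le E(k)\le\bar\alpha$. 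Finally Lemma~\ref{lem:dual_bound}, applied at $k+1$ with $\norm{x_{j+1}^{k+1}-x_{j+1}^k}\le D_{\bar\alpha,j+1}$, gives $\norm{\lambda_i^{k+1}}\le\Lambda_i$, closing the induction; the displayed decrease estimate is the one just obtained.

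\textbf{Main obstacle.} The real difficulty is not any single estimate but the circular coupling above, resolved by separating the unconditional step~\eqref{eq:primal_dec} (valid for arbitrary iterates, straight from the subproblem minimizations) from the conditional Lemmas~\ref{lem:dual_bound}--\ref{lem:dual_diff} (which require membership in $S_{\bar\alpha}$ for the smoothness constants), and by using the former first. The accompanying burden is the quantitative bookkeeping: $M$, the factor $\tfrac54$, and the budget $n(M+3)$ in $\bar\alpha$ must be chosen so that every completing-the-square remainder — at the base step and at each inductive step — lands inside $\bar\alpha$, which is exactly why the conditions $\rho_i>2\rho_i^0$ and $\rho_i>\Lambda_i^2$ enter through~\eqref{eq:assum}.
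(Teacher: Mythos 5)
Your plan follows the paper's proof essentially step for step: induction on $k$, a base case handled by Young's inequality and the primal descent inequality~\eqref{eq:primal_dec}, and an inductive step that first uses~\eqref{eq:primal_dec} to force $x^{k+1}\in S_{\bar\alpha}$, then invokes Lemma~\ref{lem:dual_bound} for the dual bound and Lemma~\ref{lem:dual_diff} together with~\eqref{eq:para_con} for the Lyapunov decrease. The way you break the circularity (level-set membership first, conditional lemmas second) is exactly the paper's mechanism.

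There is, however, one concrete quantitative gap as written: your induction hypothesis $E(k)\le\bar\alpha$ is too weak to close the absorption step. From $L_\rho(x^{k+1},\lambda^k)\le E(k)$ and the completed square you get
\begin{equation*}
\sum_{i=0}^n f_i(x_i^{k+1})+\sum_{i=0}^{n-1}\frac{\rho_i^0}{2}\norm{x_{i+1}^{k+1}-\varphi(x_i^{k+1})}^2 \le E(k)+\sum_{i=0}^{n-1}\frac{\norm{\lambda_i^k}^2}{2(\rho_i-\rho_i^0)} \le E(k)+n,
\end{equation*}
using $\norm{\lambda_i^k}^2\le\Lambda_i^2<\rho_i$ and $\rho_i-\rho_i^0>\rho_i/2$. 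With only $E(k)\le\bar\alpha$ this yields membership in $S_{\bar\alpha+n}$, not $S_{\bar\alpha}$, and there is no remaining ``slack of $\bar\alpha$'' to absorb the extra $n$ — the constants $M_f,M_\varphi,\dots$ and $D_{\bar\alpha,j+1}$ are all tied to $S_{\bar\alpha}$, so the argument does not close. The fix is what the paper does: carry the sharper hypothesis $E(k)\le \sum_i f_i(x_i^0)+\sum_i\frac{5\norm{\lambda_i^0}^2}{4\rho_i^0}+n(M+2)=\bar\alpha-n$ through the induction. Your base case in fact delivers this stronger bound on $E(1)$ (the paper's estimate gives $E(1)\le\bar\alpha-n$, not merely $\bar\alpha$), and it propagates because $E(k+1)\le E(k)$; with that adjustment your argument coincides with the paper's proof.
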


\begin{proof}
    We prove the lemma by induction. First, we investigate the case where $k = 0$ and $k = 1$. Note that 
    \begin{align}\label{eq:est_0}
        L_\rho(x^0, \lambda^0) ={} & \sum_{i=0}^n f_i(x_i^0) + \sum_{i=0}^{n-1}\inner{\lambda_i^0, x_{i+1}^0 - \varphi(x_i^0)} + \frac{\rho_i}{2}\norm{x_{i+1}^0 - \varphi(x_i^0)}^2 \nonumber \\
        \le{} & \sum_{i=0}^n f_i(x_i^0) + \sum_{i=0}^{n-1} \frac{\norm{\lambda_i^0}^2}{2\rho_i} + \rho_i \norm{x_{i+1}^0 - \varphi(x_i^0)}^2 \nonumber \\
        \le{} & \sum_{i=0}^n f_i(x_i^0) + \sum_{i=0}^{n-1} \frac{\norm{\lambda_i^0}^2}{4\rho_i^0} + nM,
    \end{align}
    where the first inequality is because $\inner{a, b} \le \frac{1}{2\rho}\norm{a}^2 + \frac{\rho}{2}\norm{b}^2$ and the last inequality comes from $\rho_i > 2\rho_i^0$ and $\norm{x^0_{i+1} - \varphi(x_i^0)}^2 \le M/\rho_i$. From the primal descent property~\eqref{eq:primal_dec}, we have that 
    \begin{equation}
        L_\rho(x^1, \lambda^0) \le L_\rho(x^0, \lambda^0),
    \end{equation}
    which implies 
    \begin{align}
        & \sum_{i=0}^n f_i(x_i^1) + \sum_{i=0}^{n-1} \frac{\rho_i^0}{2}\norm{x_{i+1}^1 - \varphi(x_i^1)}^2\nonumber \\
        \le{} & L_\rho(x^0, \lambda^0) - \sum_{i=0}^{n-1} \inner{\lambda_i^0, x_{i+1}^1 - \varphi(x_i^1)} - \sum_{i=0}^{n-1} \frac{\rho_i - \rho_i^0}{2}\norm{x_{i+1}^1 - \varphi(x_i^1)}^2  \nonumber \\
        \le{} & L_\rho(x^0, \lambda^0) + \sum_{i=0}^{n-1}\frac{\norm{\lambda_i^0}^2}{2(\rho_i - \rho_i^0)} \nonumber \\
        \le{} & L_\rho(x^0, \lambda^0) + \sum_{i=0}^{n-1} \frac{\norm{\lambda_i^0}^2}{2\rho_i^0} \nonumber \\
        \le{} & \sum_{i=0}^n f_i(x_i^0) + \sum_{i=0}^{n-1} \frac{3\norm{\lambda_i^0}^2}{4\rho_i^0} + nM \le \bar{\alpha}.
    \end{align}
    The last inequality is given by~\eqref{eq:est_0}. By Assumption~\ref{asm:level_set}, we have that $x^1 \in S_{\bar{\alpha}}$. In addition, 
    \begin{align}
        \sum_{i=0}^n f_i(x_i^0) + \sum_{i=0}^{n-1} \frac{\rho_i^0}{2}\norm{x_{i+1}^0 - \varphi(x_i^1)}^2 \le{} & \sum_{i=0}^n f_i(x_i^1) + \sum_{i=0}^{n-1} \frac{\rho_i^0 M}{2 \rho_i} \nonumber \\
        \le{} & \sum_{i=0}^n f_i(x_i^1) + \frac{n}{4}M \le \bar{\alpha}.
    \end{align}
    This implies $x^0\in S_{\bar{\alpha}}$. For the estimation of dual variables, from~\eqref{eq:dual_bound} and the assumption~\eqref{eq:assum} we have that
    \begin{align}\label{eq:est_lam_1}
        \norm{\lambda_i^1}^2 \le{} & \left(\frac{1-M_\varphi^{n-i}}{1-M_\varphi} M_f + \sum_{j-i}^{n-1}B_{j,i}\norm{x_{j+1}^1 - x_{j+1}^0}\right)^2 \nonumber \\
        \le{} & \left(\frac{1-M_\varphi^{n-i}}{1-M_\varphi} M_f + \sum_{j-i}^{n-1}B_{j,i}D_{\bar{\alpha},j+1}\right)^2 \le \rho_i,
    \end{align}
    for $i=0, \ldots, n-1$. In order to perform induction in later argument, we also need the estimate for $E(1)$ as 
    \begin{align}
        E(1) ={} & L_\rho(x^1, \lambda^1) + \sum_{i=0}^n \frac{1}{4\eta_i}\norm{x_i^1 - x_i^0}^2 \nonumber \\
        ={} & L_\rho(x^1, \lambda^1) - L_\rho(x^1, \lambda^0) + L_\rho(x^1, \lambda^0) - L_\rho(x^0, \lambda^0) \nonumber \\
        & + L_\rho(x^0,\lambda^0) + \sum_{i=0}^n \frac{1}{4\eta_i}\norm{x_i^1 - x_i^0}^2 \nonumber \\
        \le{} & \sum_{i=0}^{n-1} \frac{1}{\rho_i}\norm{\lambda_i^1 - \lambda_i^0}^2 + L_\rho(x^0, \lambda^0) - \sum_{i=0}^n \frac{1}{4\eta_i}\norm{x_i^1 - x_i^0}^2 \nonumber \\
        \le{} & \sum_{i=0}^{n-1} \left(\frac{2}{\rho_i}\norm{\lambda_i^1}^2 + \frac{2}{\rho_i}\norm{\lambda_i^0}^2\right) + \sum_{i=0}^n f_i(x_i^0) + \sum_{i=0}^{n-1} \frac{\norm{\lambda_i^0}^2}{4\rho_i^0} + nM \nonumber \\
        \le{} & \sum_{i=0}^n f_i(x_i^0) + \sum_{i=0}^{n-1} \frac{5\norm{\lambda_i^0}^2}{4\rho_i^0} + n(M+2).
    \end{align}
    where the first inequality is the primal descent and dual ascent. The second inequality comes from $\norm{a + b}^2 \le 2\norm{a}^2 + 2\norm{b}^2$ and the estimate~\eqref{eq:est_0}. For the last inequality, we use the condition $\rho_i > \rho_i^0$ and~\eqref{eq:est_lam_1}. 

    Now, we prove the estimates by induction. Suppose that the sequences generated by the proximal ADMM~\eqref{eq:admm} satisfy 
    \begin{subequations}
    \begin{align}
        & x^k \in S_{\bar{\alpha}}, \\
        & \norm{\lambda_i^k} \le \frac{1-M_\varphi^{n-i}}{1-M_\varphi} + \sum_{j=i}^{n-1}B_{j,i}D_{\bar{\alpha},j+1}, \text{ for } i=0,\ldots, n-1, \\
        & E(k) \le \sum_{i=0}^n f_i(x_i^0) + \sum_{i=0}^{n-1} \frac{5\norm{\lambda_i^0}^2}{4\rho_i^0} + n(M+2), \label{eq:energy_est}
    \end{align}
    \end{subequations}
    for some $k \ge 1$, then we show that these properties also hold for $k+1$. Note that by the primal descent condition, we have that 
    \begin{equation}
        L_\rho(x^{k+1}, \lambda^k) \le L_\rho(x^k, \lambda^k) \le E(k),
    \end{equation}
    which yields 
    \begin{equation}
        \sum_{i=0}^n f_i(x_i^{k+1}) + \sum_{i=0}^{n-1} \inner{\lambda_i^k, x_{i+1}^{k+1} - \varphi(x_i^{k+1})} + \frac{\rho_i}{2}\norm{x_{i+1}^{k+1} - \varphi(x_i^{k+1})}^2 \le E(k).
    \end{equation}
    With some manipulation, we obtain that 
    \begin{multline}
        \sum_{i=0}^n f_i(x_i^{k+1}) + \sum_{i=0}^{n-1} \frac{\rho_i^0}{2}\norm{x_{i+1}^{k+1} - \varphi(x_i^{k+1})}^2 \\ + \sum_{i=0}^{n-1} \frac{\rho_i - \rho_i^0}{2}\norm{x_{i+1}^{k+1} - \varphi(x_i^{k+1}) + \frac{1}{\rho_i - \rho_i^0}\lambda_i^k}^2 - \sum_{i=0}^{n-1} \frac{\norm{\lambda_i^k}^2}{2(\rho_i - \rho_i^0)} \le E(k).
    \end{multline}
    We deduce that 
    \begin{align}\label{eq:est_lev_bdd}
        \sum_{i=0}^n f_i(x_i^{k+1}) + \sum_{i=0}^{n-1} \frac{\rho_i^0}{2}\norm{x_{i+1}^{k+1} - \varphi(x_i^{k+1})}^2 \le{} & E(k) + \sum_{i=0}^{n-1} \frac{\norm{\lambda_i^k}^2}{2(\rho_i - \rho_i^0)} \nonumber \\
        \le{} & E(k) + \sum_{i=0}^{n-1} \frac{\rho_i}{2(\rho_i - \rho_i^0)} \nonumber \\
        \le{} & E(k) + n.
    \end{align}
    The second inequality comes from 
    \begin{equation}
        \norm{\lambda_i^k}^2 \le \left(\frac{1-M_\varphi^{n-i}}{1-M_\varphi} + \sum_{j=i}^{n-1}B_{j,i}D_{\bar{\alpha},j+1}\right) \le \rho_i, \text{ for } i=0,\ldots, n-1,
    \end{equation}
    and the last inequality holds because the function $\rho_i/(2(\rho_i - \rho_i^0))$ is monotonically decreasing for $\rho_i \ge 2\rho_i^0$. Combining~\eqref{eq:est_lev_bdd} with~\eqref{eq:energy_est}, we get 
    \begin{equation}
        \sum_{i=0}^n f_i(x_i^{k+1}) + \sum_{i=0}^{n-1} \frac{\rho_i^0}{2}\norm{x_{i+1}^{k+1} - \varphi(x_i^{k+1})}^2 \le \sum_{i=0}^n f_i(x_i^0) + \sum_{i=0}^{n-1} \frac{5\norm{\lambda_i^0}^2}{4\rho_i^0} + n(M+3) = \bar{\alpha}.
    \end{equation}
    This implies that $x^{k+1} \in S_{\bar{\alpha}}$. Then from Lemma~\eqref{lem:dual_bound}, equation~\eqref{eq:dual_bound} and the definition of $D_{\bar{\alpha}, i}$~\eqref{eq:def_bound}, we obtain 
    \begin{align}
        \norm{\lambda_i^{k+1}} \le \frac{1-M_\varphi^{n-i}}{1-M_\varphi} + \sum_{j=i}^{n-1}B_{j,i}D_{\bar{\alpha},j+1}, 
    \end{align}
    for $i=0, \ldots, n-1$. As $x^k, x^{k+1} \in S_{\bar{\alpha}}$, by Lemma~\eqref{lem:dual_diff} and~\eqref{eq:lya_diff_fin}, we have that 
    \begin{equation} \label{eq:dec_lya}
        E(k+1) - E(k) \le - \sum_{i=0}^n \left(c_i \norm{x_i^{k+1} - x_i^k}^2 + \tilde{c}_i \norm{x_i^k - x_i^{k-1}}^2 \right),
    \end{equation}
    where $c_i$ and $\tilde{c}_i$ are defined by~\eqref{eq:const_lya}. By assumption, $c_i > 0$ and $\tilde{c}_i > 0$ for $i=0, \ldots, n$, thus 
    \begin{equation}
        E(k+1) \le E(k) \le \sum_{i=0}^n f_i(x_i^0) + \sum_{i=0}^{n-1} \frac{5\norm{\lambda_i^0}^2}{4\rho_i^0} + n(M+2).
    \end{equation}
    This completes our proof.
\end{proof}

Based on the previous lemmas, the convergence properties of the sequence generated by the algorithm~\eqref{eq:admm} are clear. If we choose a suitable initial point and set the appropriate values for the parameters, the sequences will stay in a bounded domain (thus in a compact domain) and the Lyapunov function decreases as~\eqref{eq:dec_lya}. We can then estimate the convergence rate of the algorithm~\eqref{eq:admm} and have the following theorem.

\begin{theorem}
    With the same assumptions as in Lemma~\eqref{lem:lya_des}, the sequence $x^k$ generated by the proximal ADMM~\eqref{eq:admm} satisfies $\norm{x^{k+1} - x^k} \to 0$ and $\norm{\lambda^{k+1} - \lambda^k} \to 0$ as $k \to \infty$. Furthermore, we have 
    \begin{equation}\label{eq:fin_est_ine}
        \min_{j=1, \ldots, k} \sum_{i=0}^n  \left(c_i + \tilde{c}_i\right) \norm{x_i^{j+1} - x_i^j}^2 \le o(1/k),
    \end{equation}
    where the constant $c_i$ and $\tilde{c}_i$ are defined by~\eqref{eq:const_lya}. Inequality~\eqref{eq:fin_est_ine} implies that among all the subsequences that converge, the fastest converges at a rate of $o(1/k)$.
\end{theorem}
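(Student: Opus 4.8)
The plan is a Lyapunov telescoping argument, refined to $o(1/k)$ by a tail-sum trick. First I would record that the Lyapunov function is bounded below along the iterates: by Lemma~\ref{lem:lya_des} one has $x^k \in S_{\bar\alpha}$ for all $k \ge 1$, and $S_{\bar\alpha}$ is compact (closed, by continuity of the $f_i$ and $\varphi$, and bounded, by Assumption~\ref{asm:level_set}), while the same lemma supplies a uniform bound on $\norm{\lambda_i^k}$. Using the representation~\eqref{eq:aug_lag2}, $L_\rho(x^k,\lambda^k) \ge \sum_{i=0}^n f_i(x_i^k) - \sum_{i=0}^{n-1}\tfrac{1}{2\rho_i}\norm{\lambda_i^k}^2$; since $\sum_i f_i$ attains a minimum on the compact set $S_{\bar\alpha}$ and the multiplier norms are uniformly bounded, $E(k) \ge L_\rho(x^k,\lambda^k) \ge E_{\min}$ for a finite constant $E_{\min}$ independent of $k$.

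Next I would telescope the one-step decrease~\eqref{eq:dec_lya}. Summing it over $k=1,\dots,K$ and using $E(K+1) \ge E_{\min}$ gives
\[
  \sum_{k=1}^{K}\sum_{i=0}^n\Big(c_i\norm{x_i^{k+1}-x_i^k}^2 + \tilde c_i\norm{x_i^k-x_i^{k-1}}^2\Big) \le E(1) - E_{\min}.
\]
Letting $K\to\infty$, the series on the left converges, so its general term tends to $0$; because $c_i,\tilde c_i>0$ this forces $\norm{x_i^{k+1}-x_i^k}\to 0$ for every $i$, hence $\norm{x^{k+1}-x^k}\to 0$. Feeding this into Lemma~\ref{lem:dual_diff} — which applies since $x^{k-1},x^k,x^{k+1}\in S_{\bar\alpha}$ and bounds $\norm{\lambda_i^{k+1}-\lambda_i^k}^2$ by a fixed linear combination of the squared primal increments — yields $\norm{\lambda^{k+1}-\lambda^k}\to 0$.

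For the rate, I would first reindex the $\tilde c_i$-part of the telescoped sum by one step, which gives $\sum_{k=1}^{K}\sum_i\big(c_i\norm{x_i^{k+1}-x_i^k}^2 + \tilde c_i\norm{x_i^k-x_i^{k-1}}^2\big) \ge \sum_{k=1}^{K-1} u_k$, where $u_k := \sum_{i=0}^n(c_i+\tilde c_i)\norm{x_i^{k+1}-x_i^k}^2 \ge 0$; hence $\sum_{k=1}^\infty u_k < \infty$. A crude averaging bound already gives $\min_{1\le j\le k} u_j \le \tfrac1k\sum_{j=1}^k u_j = O(1/k)$, so to reach $o(1/k)$ I would restrict to the second half of the window: with $m = \lfloor k/2\rfloor$,
\[
  \min_{1\le j\le k} u_j \le \min_{m< j\le k} u_j \le \frac{1}{k-m}\sum_{j=m+1}^{k} u_j \le \frac{2}{k}\sum_{j=m+1}^{\infty} u_j ,
\]
and since the full series converges its tail $\sum_{j=m+1}^\infty u_j\to 0$ as $k\to\infty$, whence $k\min_{1\le j\le k} u_j\to 0$, i.e.\ $\min_{1\le j\le k} u_j = o(1/k)$, which is exactly~\eqref{eq:fin_est_ine}. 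The statement about convergent subsequences is then immediate, since the index attaining the minimum over $\{1,\dots,k\}$ identifies iterates whose squared increments decay at least as fast as $o(1/k)$, and no subsequence beats the running minimum.

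I expect the only point needing genuine care to be the lower bound on $E(k)$: it must be obtained by combining compactness of $S_{\bar\alpha}$ with the uniform multiplier bound from Lemma~\ref{lem:lya_des}, rather than from any global lower bound on $L_\rho$, which need not exist. The remaining ingredients — telescoping, passing the primal convergence through Lemma~\ref{lem:dual_diff}, the index shift producing the combined coefficient $c_i+\tilde c_i$, and the second-half tail estimate — are routine.
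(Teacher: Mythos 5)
Your proposal is correct and follows essentially the same route as the paper: lower-bound the Lyapunov function using $x^k\in S_{\bar\alpha}$ and the uniform multiplier bound from Lemma~\ref{lem:lya_des}, telescope the one-step decrease~\eqref{eq:dec_lya} to get a convergent series of squared increments (hence $\norm{x^{k+1}-x^k}\to 0$ and, via Lemma~\ref{lem:dual_diff}, $\norm{\lambda^{k+1}-\lambda^k}\to 0$), and obtain $o(1/k)$ by averaging over the window $\lfloor k/2\rfloor,\dots,k$ and letting the tail of the convergent series vanish. Your lower bound drops the completed square in~\eqref{eq:aug_lag2} instead of splitting off $\rho_i^0$ and invoking $B_{\bar\alpha}$ as the paper does, and your explicit reindexing of the $\tilde c_i$-terms is a slightly cleaner bookkeeping of the same estimate; these are cosmetic differences only.
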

\begin{proof}
    First, we show that the Lyapunov function~\eqref{eq:lyapunov} is lower-bounded. Note that 
    \begin{align}
        L_\rho(x^k, \lambda^k) ={} & \sum_{i=0}^n f_i(x_i^k) + \sum_{i=0}^{n-1} \inner{\lambda_i^k, x_{i+1}^k - \varphi(x_i^k)} + \sum_{i=0}^{n-1} \frac{\rho_i}{2} \norm{x_{i+1}^k - \varphi(x_i^k)}^2 \nonumber \\
        ={} & \sum_{i=0}^n f_i(x_i^k) + \sum_{i=0}^{n-1} \frac{\rho_i^0}{2}\norm{x_{i+1}^k - \varphi(x_i^k)}^2 \nonumber \\
        & + \sum_{i=0}^{n-1}\frac{\rho_i - \rho_i^0}{2}\norm{x_{i+1}^k - \varphi(x_i^k) + \frac{1}{\rho_i - \rho_i^0} \lambda_i^k}^2 - \sum_{i=0}^{n-1}\frac{\norm{\lambda_i^k}^2}{2(\rho_i - \rho_i^0)}.
    \end{align}
    From the assumption and Lemma~\ref{lem:lya_des}, we know that $\rho_i \ge 2\rho_i^0$ and $\norm{\lambda_i^k}^2 \le \rho_i$. So, 
    \begin{align}
        L_\rho(x^k, \lambda^k) \ge{} & \sum_{i=0}^n f_i(x_i^k) + \sum_{i=0}^{n-1} \frac{\rho_i^0}{2}\norm{x_{i+1}^k - \varphi(x_i^k)}^2 - \sum_{i=0}^{n-1} \frac{\rho_i}{2(\rho_i - \rho_i^0)} \nonumber \\
        \ge{} & B_{\bar{\alpha}} - n.
    \end{align}
    Hence, the Lyapunov function is bounded below by 
    \begin{equation}
        E(k) \ge L(x^k, \lambda^k) \ge B_{\bar{\alpha}} - n,
    \end{equation}
    for all $k \in \mathbb{N}^+$. Now, because inequality~\eqref{eq:dec_lya} holds, summing up all the inequalities from $k=1$ to $k=N$ yields that 
    \begin{equation}
        E(N+1) - E(1) \le -\sum_{k=1}^N \sum_{i=0}^n  \left( c_i \norm{x_i^{k+1} - x_i^k}^2 + \tilde{c}_i \norm{x_i^k - x_i^{k-1}}^2 \right).
    \end{equation}
    By some rearrangement, we have 
    \begin{equation}
        \sum_{k=1}^N \sum_{i=0}^n  \left( c_i \norm{x_i^{k+1} - x_i^k}^2 + \tilde{c}_i \norm{x_i^k - x_i^{k-1}}^2 \right) \le E(1) - E(N+1) \le E(1) - B_{\bar{\alpha}} + n.
    \end{equation}
    Let $N\to \infty$, and we obtain
    \begin{equation}\label{eq:infinte_series}
        \sum_{k=1}^\infty \sum_{i=0}^n \left(c_i + \tilde{c}_i\right)\norm{x_i^{k+1} - x_i^k}^2 \le E(1) - B_{\bar{\alpha}} + n.
    \end{equation}
    This immediately gives us $\norm{x^{k+1} - x^k} \to 0$ as $k\to \infty$. From Lemma~\ref{lem:dual_diff} we know that $\norm{\lambda^{k+1} - \lambda^k} \to 0$ as $k\to \infty$. 
    
    Furthermore, we have that
    \begin{align}\label{eq:final_est}
        \min_{j=1, \ldots, k} \sum_{i=0}^n  \left(c_i + \tilde{c}_i\right) \norm{x_i^{j+1} - x_i^j}^2 \le{} & \min_{j=\lfloor k/2 \rfloor, \ldots, k} \sum_{i=0}^n  \left(c_i + \tilde{c}_i\right) \norm{x_i^{j+1} - x_i^j}^2 \nonumber \\
        \le{} & \frac{2}{k+4}\sum_{j=\lfloor k/2\rfloor}^k \sum_{i=0}^n  \left( c_i \norm{x_i^{j+1} - x_i^j}^2 + \tilde{c}_i \norm{x_i^j - x_i^{j-1}}^2 \right).
    \end{align}
    Because the infinite series in~\eqref{eq:infinte_series} converges, the sum 
    \begin{equation}
        \sum_{j=\lfloor k/2\rfloor}^k \sum_{i=0}^n  \left( c_i \norm{x_i^{j+1} - x_i^j}^2 + \tilde{c}_i \norm{x_i^j - x_i^{j-1}}^2 \right)
    \end{equation}
    goes to $0$ as $k\to \infty$. This, together with~\eqref{eq:final_est}, implies the $o(1/k)$ convergence rate for the minimum of $\sum_{i=0}^n  \left(c_i + \tilde{c}_i\right) \norm{x_i^{j+1} - x_i^j}^2$.
\end{proof}

\subsection{Generalization and discussion} \label{sec:generalization}
Our analysis in Section~\ref{sec:main_results} provides a framework for analyzing the convergence of the proximal ADMM method~\eqref{eq:admm_ori} applied to more general dynamics-constrained optimization problems. In this subsection, we discuss several possible generalizations of the problem and the result we can have.

\textbf{Non-autonomous dynamics and semi-implicit schemes.}
The dynamics as constraints in~\eqref{eq:con_constraints} is called non-autonomous if the function $\Phi = \Phi(x,t)$ depends on time $t$. For the numerical schemes, in order to reduce the associated stability constraint, the principle linear operators are treated implicitly, while the nonlinear terms are treated explicitly to avoid expensive computation of solving the nonlinear equations. This kind of discretized problems can be formulated as 
\begin{subequations}
    \begin{align}
        \min_{x}\ & \sum_{i=0}^n f_i(x_i), \\
        \mathrm{s.\ t.}\ & A_{j}x_{j+1} = \varphi_j(x_j), \ j=0,\ldots, n-1,
    \end{align}
\end{subequations}
where $A_j\in \mathbb{R}^{d\times d}$ are invertible matrices. The algorithm~\eqref{eq:admm_ori} applied to this problem is also convergent, as the same analysis framework in Section~\ref{sec:main_results} works.

\textbf{Implicit schemes.}
We can also consider the implicit scheme to numerically discretize problem~\eqref{eq:con_constraints}. In this case, the optimization problem becomes 
\begin{subequations}\label{eq:imp_pro}
    \begin{align}
        \min_{x}\ & \sum_{i=0}^n f_i(x_i), \\
        \mathrm{s.\ t.}\ & \phi(x_{j+1}) = x_j, \ j=0, \ldots, n-1.
    \end{align}
\end{subequations}
If we directly apply algorithm~\eqref{eq:admm_ori} to this problem, the proof in Section~\ref{sec:main_results} does not work without further assumptions on the gradient of the function $\phi$. But notice that, with the change of variables $y_i = x_{n-i}$ for $i=0,\ldots, n$, the formulation of problem~\eqref{eq:imp_pro} becomes the same as problem~\eqref{eq:optpro}. This suggests that we need to reverse the order when updating the subproblems in algorithm~\eqref{eq:admm_ori}, that is, the following method will have the same convergence properties as the algorithm~\eqref{eq:admm_ori} for problem~\eqref{eq:optpro}.
\begin{equation}\label{eq:imp_admm}
\begin{cases}
    x_i^{k+1} = \argmin \left\{ L_\rho(x_{<i}^{k+1}, x_i, x_{>i}^k, \lambda^k) + \frac{1}{2\eta_i}\norm{x_i - x_i^k}^2 \right\}, & \text{ for } i=n, \ldots, 0,\\
    \lambda_j^{k+1} = \lambda_j^k + \rho_j\left( \phi(x_{j+1}^{k+1}) - x_j^{k+1} \right), & \text{ for } j=0, \ldots, n-1.
\end{cases}
\end{equation}
The order in the primal update matters in the convergence proof, but for the dual variables, the order of updating does not influence the convergence.

\textbf{Optimal control problems.}
The optimal control problems usually involve finding an optimal control of some dynamics that minimizes the cost function. In the continuous form, it is the following variational problem 
\begin{subequations}
\begin{align}
    \min_{U(t)\in \mathcal{A}}\ & \int_0^T F(X(t), U(t), t) \dd t + G(X(T)), \\
    \mathrm{s.\ t.}\ & \dot{X}(t) = \Phi(X(t), U(t)) \\
    & X(0) = x_0.
\end{align}
\end{subequations}
where $\mathcal{A}$ is the function space for all admissible controls. $x^0$ is the initial condition of the dynamical system. Considering an explicit scheme for this variational problem, we can get 
\begin{subequations}\label{eq:opt_con}
    \begin{align}
        \min_{u\in \mathcal{A}_h}\ & \sum_{i=0}^n f_i(x_i, u_i), \\
        \mathrm{s.\ t.}\ & x_{j+1} = \varphi(x_j, u_j),\ j=0, \ldots, n-1,
    \end{align}
\end{subequations}
where $x = (x_1, \ldots, x_n) \in \mathbb{R}^{nd}$ and $u = (u_0, \ldots, u_n) \in \mathbb{R}^{(n+1)d}$. $x_0$ is the given initial condition. The set $\mathcal{A}_h$ consists of all discrete admissible controls. The augmented Lagrangian for problem~\eqref{eq:opt_con} is 
\begin{equation}
    L_\rho(x, u, \lambda) = \sum_{i=0}^n f_i(x_i, u_i) + \sum_{i=0}^{n-1}\inner{\lambda_i, x_{i+1} - \varphi(x_i, u_i)} + \sum_{i=0}^{n-1} \frac{\rho_i}{2}\norm{x_{i+1} - \varphi(x_i, u_i)}^2.
\end{equation}
We consider the algorithm 
\begin{equation} \label{eq:admm_optcon}
    \left\{ \begin{aligned}
        & u_q^{k+1} = \argmin_{u_q \in \mathcal{A}_h} \left\{ L_\rho(x^k, u_{<q}^{k+1}, u_q, u_{>q}, \lambda^k) + \frac{1}{2\xi_q}\norm{u_q - u_q^k}^2 \right\}, & \text{ for } q = 0, \ldots, n, \\
        & x_i^{k+1} = \argmin_{x_i} \left\{ L_\rho(x_{<i}^{k+1}, x_i, x_{>i}^k, u^{k+1}, \lambda^k) + \frac{1}{2\eta_i}\norm{x_i - x_i^k}^2 \right\}, & \text{ for } i=1, \ldots, n, \\
        & \lambda_j^{k+1} = \lambda_j^k + \rho_j \left( x_{j+1}^{k+1} - \varphi(x_j^{k+1}, u_j^{k+1}) \right), & \text{ for } j = 0, \ldots, n-1.
    \end{aligned}\right.
\end{equation}
The update order for the variables $u_q$ and $\lambda_j$ can be arbitrary. However, we should update $u$ before updating $x$. In addition, when solving the subproblems for $x_i$, we need to compute $x_0^{k+1}$ to $x_n^{k+1}$ successively. With the framework proposed in Section~\ref{sec:main_results}, we can have the convergence guarantee for this algorithm~\eqref{eq:admm_optcon}.

Further generalization such as non-autonomous systems, semi-implicit schemes, and implicit schemes can be similarly obtained as in the previous discussion.

\section{Numerical experiments} \label{sec:num_exp}

In this section, numerical experiments are performed for the proximal ADMM. We discuss the way to solve the subproblems and possible derivative-free implementations. We test the algorithm on 4D variational data assimilation problems and implement it as the solver for implicit schemes of stiff problems.

\subsection{4DVar of Lorenz system}

Let us consider the classical Lorenz system, a well-known example of a three-dimensional, aperiodic, and nonlinear deterministic system known for its chaotic behavior, first studied by~\cite{lorenz1963deterministic}.  It is described by the following system of differential equations:
\begin{equation}
\label{lorenz}
\left\{ \begin{aligned}
         & \dot{x} = \sigma (y - x) \\
         & \dot{y} = x(\rho - z) - y \\
         & \dot{z} = xy - \beta z, 
         \end{aligned} \right.
\end{equation}
where the parameters are set to the classical values: $\sigma = 10$, $\rho=28$ and $\beta = 8/3$. To be compatible with the former notation, we set $v = (x, y, z)^T$. To numerically solve the Lorenz system~\eqref{lorenz}, we employ the fourth order Runge–Kutta method with a time step size of $\delta t = 0.01$ over a total time horizon of $T=3$, thus in total there are $n=300$ numerical steps. We write for simplicity the discrete dynamics after applying the numerical scheme by 

\begin{equation}
    v_{j+1} = \varphi(v_{j}), \text{ for } j = 0, \ldots, n-1,
\end{equation} 
where 
\begin{equation}
    \varphi(v_j) = v_j + \frac{\delta t}{6}(k_1 + 2 k_2 + 2 k_3 + k_4)
\end{equation}
with
\begin{align*}
		k_1 = f(v_j),\ k_2 = f(v_j + \frac{\delta t}{2}k_1),\ k_3 = f(v_j + \frac{\delta t}{2}k_2),\ k_4 = f(v_j + \delta t \cdot k_3).
	\end{align*}
The 4DVar is an optimization problem that aims to recover the dynamics via noised observation~\cite{le1986variational, courtier1987variational, talagrand1987variational}. It can be stated as the following problem.
\begin{subequations}\label{eq:constrained_opt}
\begin{align}
    \min_{v}\ & \sum_{i=0}^n f_i(v_i), \\
    \mathrm{s.t.}\ & v_{j+1} = \varphi(v_j), \ j = 0, \ldots, n-1,
\end{align}
\end{subequations}
where $v = (v_0, \ldots, v_n)$ and 
\begin{equation}
    f_i(v_i) =  \begin{cases}
        \frac{1}{2}\norm{v_0 - \hat{v}_0}^2 + \frac{\alpha}{2}\norm{v_0 - \hat{v}_b}^2, & \text{ if } i=0, \\
        \frac{1}{2}\norm{v_i - \hat{v}_{i/M}}^2, &\text{ if } i/M \in \mathbb{N}^+, \\
        0, & \text{ otherwise}.
    \end{cases}
\end{equation}
The $\hat{v}_i$ for $i=0,\ldots, n$ are the given observations and the $\hat{v}_b$ is the an estimate of the initial value. We set $\hat{v}_b = \hat{v}_0$ for simplicity and set $\alpha = 0.1$. Here we generate the observations by running the discrete dynamics starting from $(-0.5, 0.5, 20.5)$ to generate the true dynamics and then add a standard Guassian noise to each observation. We take the observation every $30$ steps, i.e., we set $M = 30$, thus there are $11$ observations. The way we create observation can be written as 
\begin{equation}
    \hat{v}_{j} = \bar{v}_{M\cdot j} + \epsilon_j, \text{ for } j=0, \ldots, n/M,
\end{equation}
where $\bar{v}_j$ is the true dynamics and $\epsilon_j \sim \mathcal{N}(0, \mathrm{Id}_3)$ are i.i.d random variables. 

\textbf{Adjoint method:} A traditional method to solve this problem is the adjoint method. Note that the constrained problem~\eqref{eq:constrained_opt} can be transformed into an unconstrained problem. 
\begin{equation} \label{eq:unconstrainted_opt}
    \min_{v_0} \sum_{i=0}^n f_i(v_i),
\end{equation}
where $v_i = \varphi^i(v_0)$. The adjoint method is a way to compute the gradient with respect to $v_0$. The procedure is the following. Given a point $v_0$, first we compute $v_i$ for all $i=1, \ldots, n$ by the iteration $v_{i+1} = \varphi(v_i)$. Then we solve the backward adjoin equation 
\begin{equation}
    \begin{cases}
        w_{j-1} = \nabla \varphi(v_{j-1})^T w_j + \nabla f_j(v_j), \\
        w_n = \nabla f_n(v_n).
    \end{cases}
\end{equation}
It is easy to show that the gradient of~\eqref{eq:unconstrainted_opt} is $w_0$. Once the gradient is computed, we can apply the CG and quasi-Newton method to solve this nonconvex optimization problem. Here we set the initial point to be the noise observation $\hat{v}_0$ and implement the Polak–Ribi\`ere CG and L-BFGS-B method. The Figure~\ref{fig:cg_bfgs} shows the numerical results.
\begin{figure}[htbp]
\centering
\begin{subfigure}[t]{0.45\linewidth}
\centering
\includegraphics[scale=0.22]{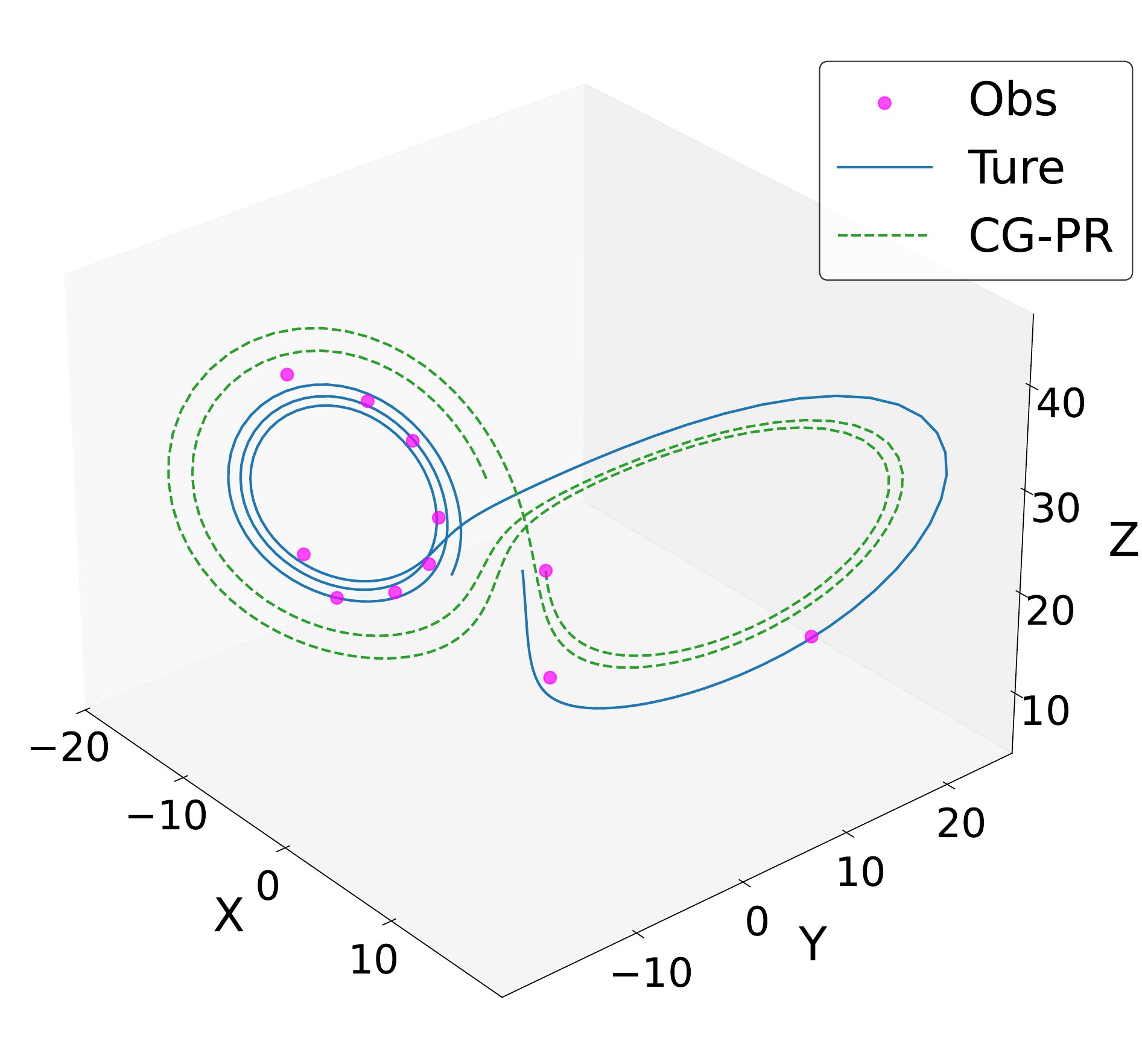}
\caption{Conjugate gradient method}
\end{subfigure}
\begin{subfigure}[t]{0.45\linewidth}
\centering
\includegraphics[scale=0.22]{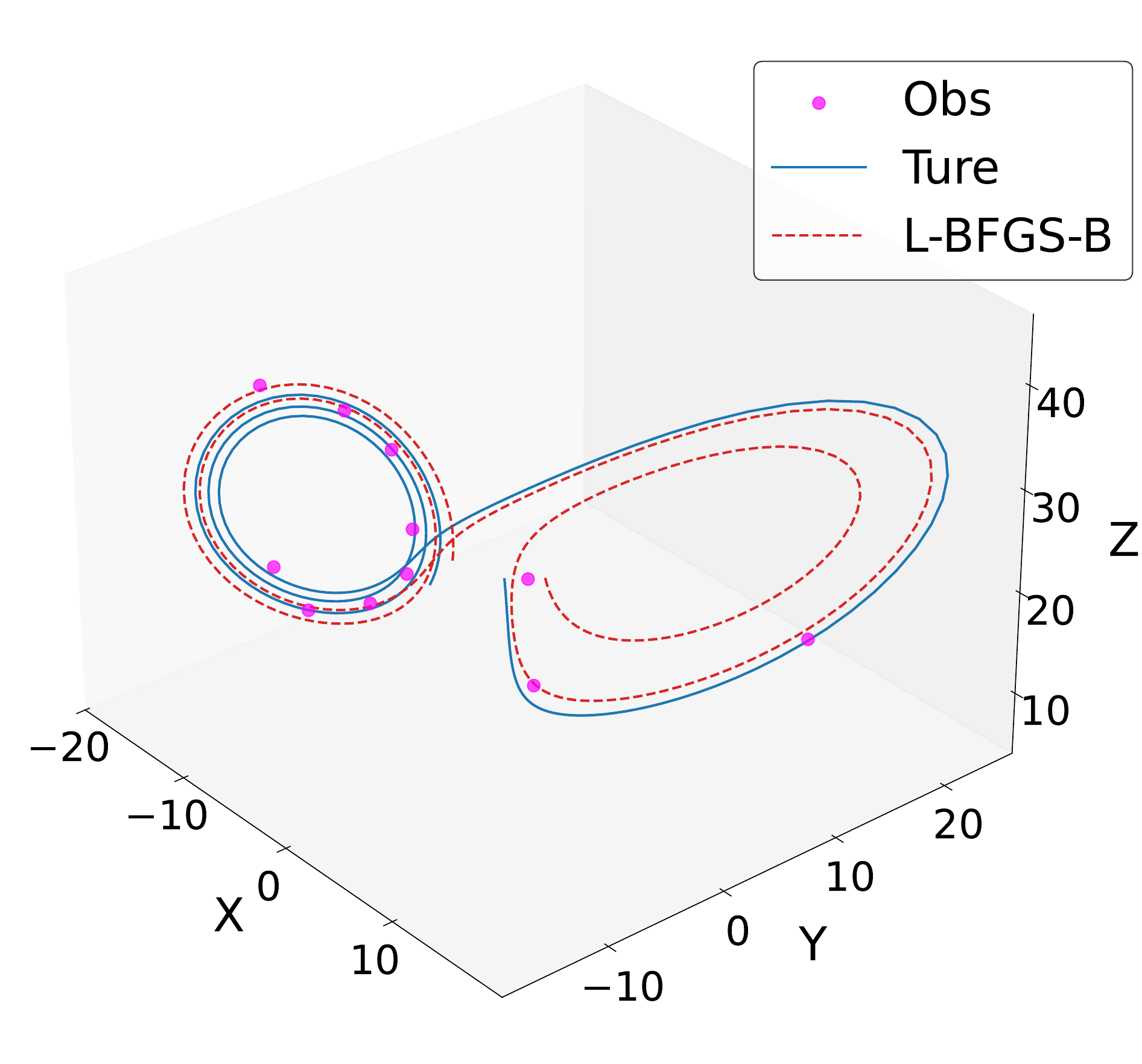}
\caption{Limited memory BFGS-B method}
\end{subfigure}
\caption{\small The Polak–Ribi\`ere CG and L-BFGS-B method are applied to the 4DVar problem~\eqref{eq:unconstrainted_opt} for the Lorenz system~\eqref{lorenz}, starting with the initial condition $\hat{v}_0$.} 
\label{fig:cg_bfgs}
\end{figure}

Note that both methods do not converge to the true dynamics, because the noise of the initial point is too large, and the objective function is highly nonconvex as demonstrated in~\cite{li2025numerical}.

\textbf{Proximal ADMM:} The difference of the proximal ADMM algorithm is that the initial value is able to use all observations, which could cancel the effect of noise and enables the recovery of the true dynamics. The parameters $\rho_j$ and $\eta_j$ can influence the KKT point to which the algorithm converges. If $\rho_j$ is too small, the algorithm will not converge, but if it is too large, the algorithm will be trapped at the local minimum near the initial point. We set $\rho_j = 0.3$ for all $j = 0, \ldots, n-1$ and $\eta_i = 10$ for all $i = 0, \ldots, n$. For the initial value, we first set the initial value $v_{M\cdot j}^0 = \hat{v}_j$, then solve all the points in between using equation $v_{j+1} = \varphi(v_j)$ and set them as the initial value $v_j^0$. We set $\lambda^0_j = (0, \ldots, 0)$ for all $j = 0, \ldots, n-1$. The subproblems in the proximal ADMM algorithm are nonlinear least squares optimization problems. There are many developed method to solving these subproblems such as Levenberg–Marquardt algorithm~\cite{fan2005quadratic}. For the initial value, we can choose the point $x_i^k$ from the previous iteration. As the parameter $\eta_i$ controls the weight of the proximal term in the subproblem, the smaller $\eta_i$ is, the easier it is to solve the subproblem. For low-dimensional problems, we can even use derivative-free methods as subproblem solvers. For the Lorenz system, we implement the derivative-free method, and show its performance.

\begin{figure}[htbp]
\centering
\begin{subfigure}[t]{0.32\linewidth}
\centering
\includegraphics[scale=0.14]{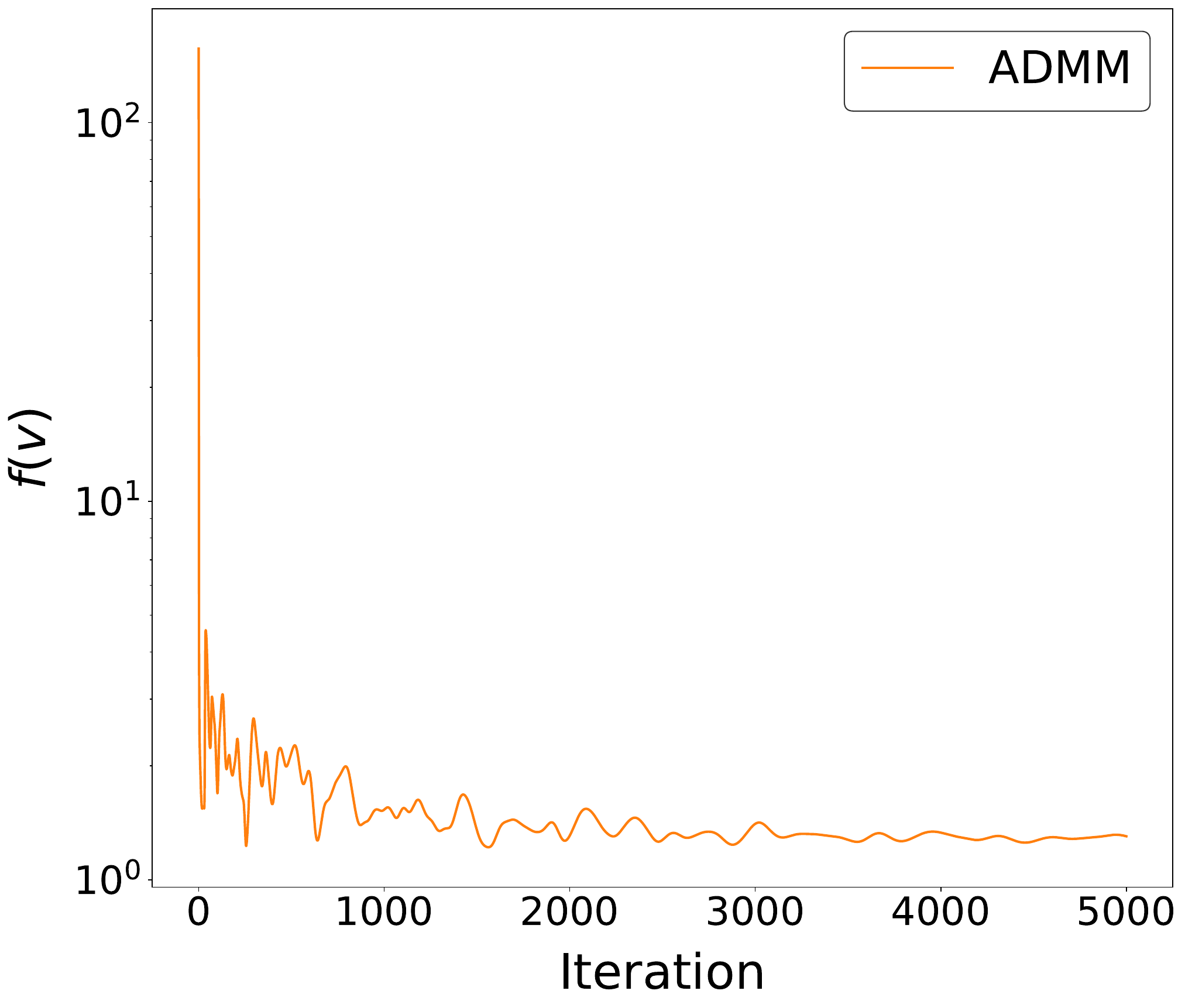}
\caption{\small Function Value}
\label{subfig:objective}
\end{subfigure}
\begin{subfigure}[t]{0.32\linewidth}
\centering
\includegraphics[scale=0.14]{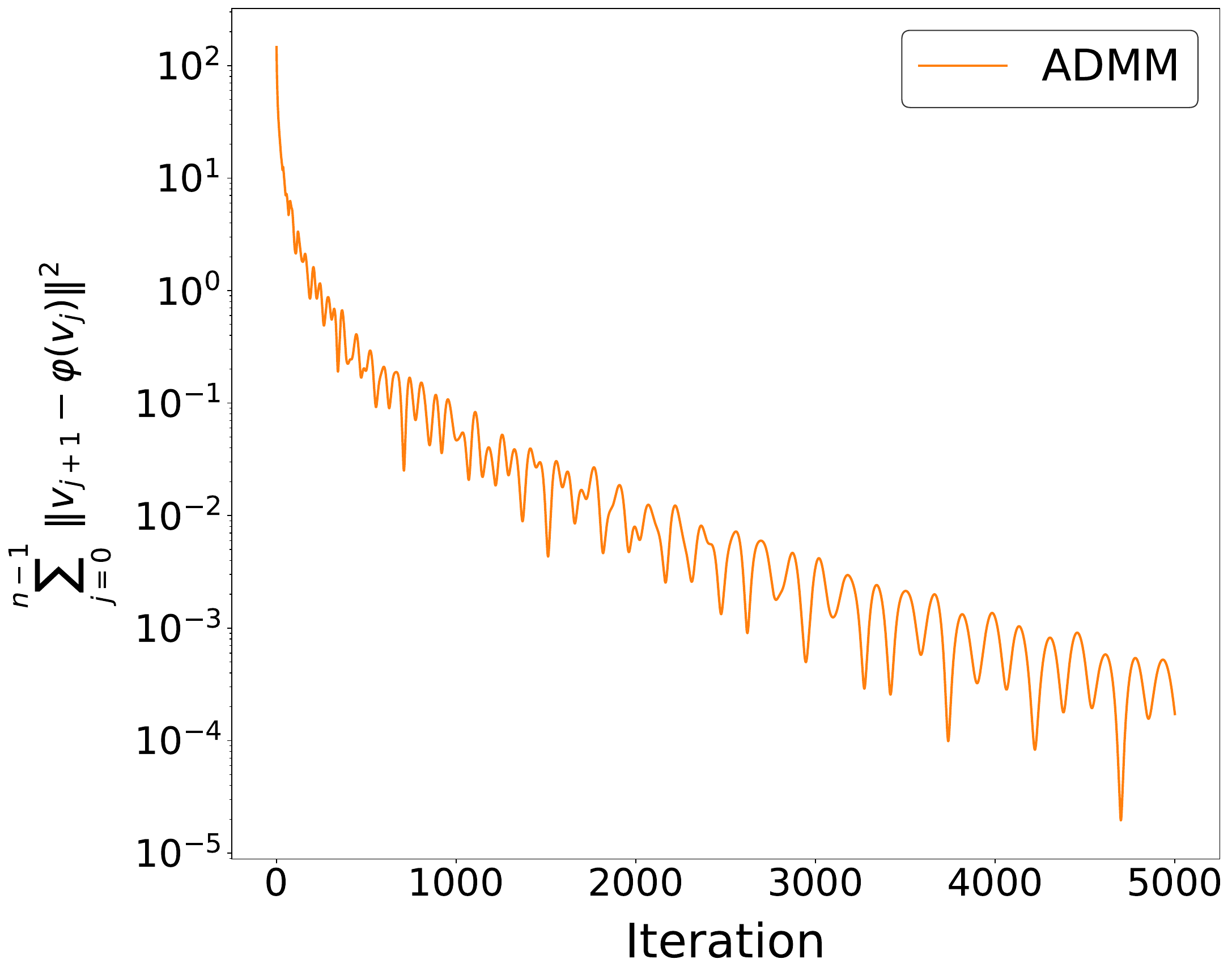}
\caption{\small Constraint error}
\label{subfig:cons_error}
\end{subfigure}
\begin{subfigure}[t]{0.32\linewidth}
\centering
\includegraphics[scale=0.14]{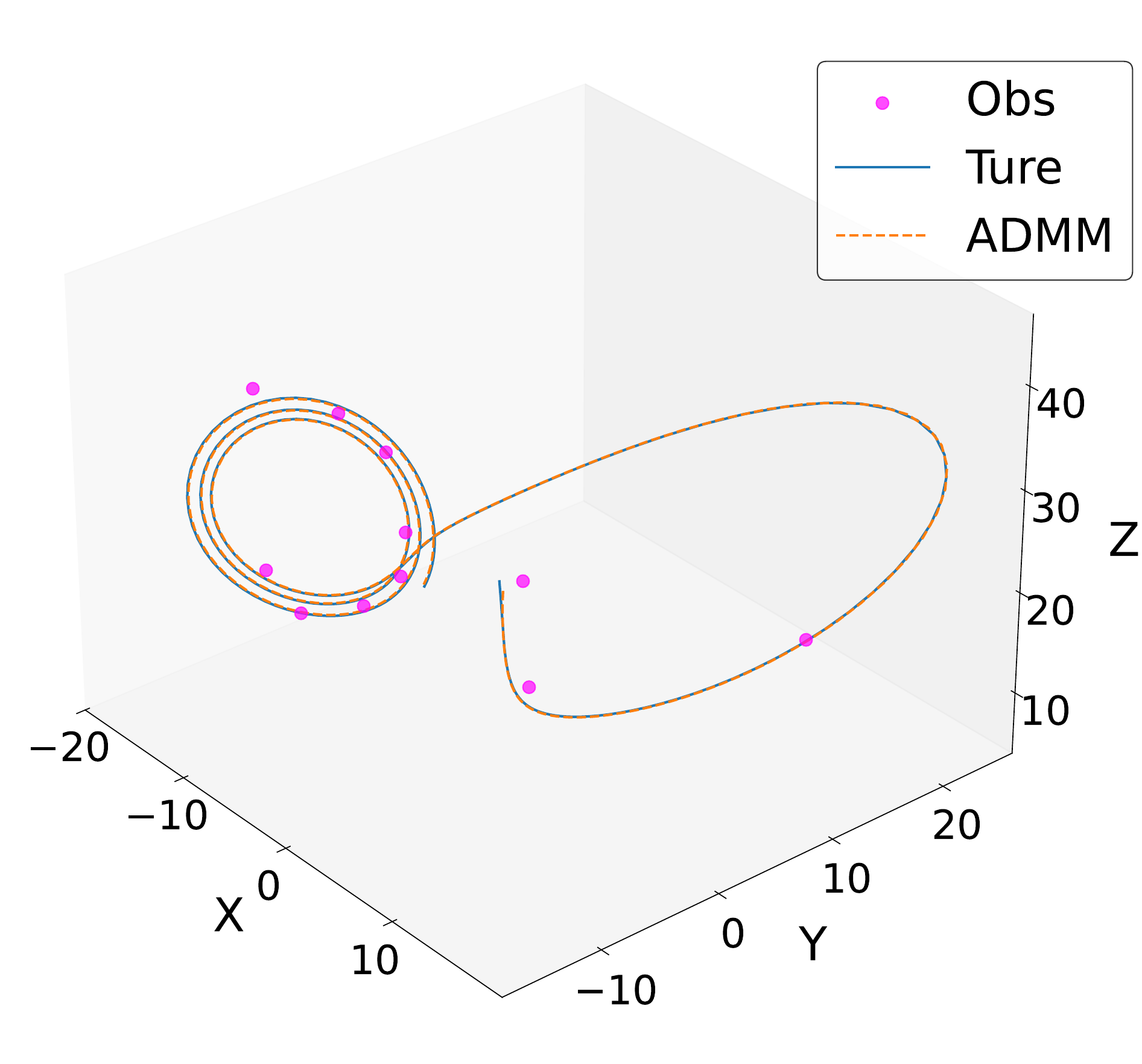}
\caption{\small Recovered solution}
\label{subfig:rec_solu}
\end{subfigure}
\caption{\small Numerical performance of the proximal ADMM~\eqref{eq:admm} applied to the 4DVar problem~\eqref{eq:constrained_opt} for the Lorenz system~\eqref{lorenz}.} 
\label{fig:lorenz_admm}
\end{figure}

Figure~\ref{fig:lorenz_admm} shows the numerical results of the proximal ADMM. The algorithm converges to the true dynamics, which prevails over the former gradient-based methods. This algorithm is not a feasible method. We also plot the figure of the constraint error. It is clear that the constraint error converges to zero as the iteration increases. In addition, it is worth mentioning that the value of objective function usually converges to its minimum in the very beginning, and then it stays the same while the constraint error converges. This phenomenon has been observed in~\cite{li2025numerical} as well. Aside from the advantages, we also need to point out that the speed of convergence of the proximal ADMM now is still very slow compared to the gradient-based methods, which usually have a super linear convergence rate. This leaves out a valuable question of how to accelerate the convergence of ADMM-type methods for future research.

\subsection{Implicit schemes of Burgers' equation}

The stiff equation is an important class of problems we face when considering numerical methods for differential equations. There is no clear definition of stiffness in the literature. Roughly speaking, for some evolution equations, certain numerical schemes, especially the explicit schemes, need the time step to be small enough to have a stable solution and obtain an acceptable accuracy. The stiff equation arises in many areas, such as modeling chemical reactions, control theory, electrical circuits, and relaxation oscillations. Most time-dependent PDEs produce a stiff ODE system after space discretization. Usually, the solution is using the implicit schemes or developing certain stable semi-implicit schemes to overcome the stiffness. In this section, we show the ability of the proximal ADMM algorithm to solve implicit schemes as discussed in Section~\ref{sec:generalization}, which is different from the traditional method of solving nonlinear equations. We use one-dimensional viscous Burgers' equation as a demonstration.

We consider the viscous Burgers' equation with Dirichlet boundary conditions.
\begin{equation}
    \begin{cases}
        \partial_t u + u\cdot \partial_x u = \nu \partial_{xx} u, & x\in (0,\pi),\ t\in (0, T], \\
        u(x, 0) = \sin(x), & x\in [0, \pi]\\
        u(0, t) = u(\pi, t) = 0, & t\in (0, T]
    \end{cases}
\end{equation}
where $\nu$ is the viscosity coefficient. For our numerical experiment, we set $\nu = 0.005$ and $T = 2$. Suppose that the solution satisfies $u\in L^2([0,T); H_0^1([0,\pi]))$ and $\partial_t u \in L^2([0,T]; H^{-1}([0,\pi]))$. 

For the numerical method of the viscous Burgers' equation, let $\delta t$ be the time step size, and the number of spatial grid is set to $m=100$ i.e., the dimension $d=101$, providing a spatial discretization size of $\delta x = \pi/m$. Thus, the number of time steps is $n = T/\delta t$. For $i=0, \ldots, n$, let $u_{i,q}$ be the numerical approximation of $u(i\delta t, q\delta x)$.

For the explicit scheme, we apply the following Lax-Friedrichs scheme.
\begin{align}
    u_{i+1, q} ={} & \frac{1}{2}(u_{i, q+1} + u_{i, q-1}) - \frac{\delta t}{2 \delta x}\left( \frac{u_{i,q+1}^2}{2} - \frac{u_{i, q-1}^2}{2}\right) + \nu\frac{\delta t}{\delta x^2} (u_{i, q+1} - 2u_{i, q} + u_{i, q-1}) \nonumber \\
    ={} & u_{i, q} - \frac{\delta t}{\delta x} (\hat{f}_{i, q+\frac{1}{2}} - \hat{f}_{i, q-\frac{1}{2}}) + \nu\frac{\delta t}{\delta x^2} (u_{i, q+1} - 2u_{i, q} + u_{i, q-1}),
\end{align}
where 
\begin{equation}
    \hat{f}_{i,q+\frac{1}{2}} = \frac{1}{2}\left( \frac{u_{i, q}^2}{2} + \frac{u_{i, q+1}^2}{2} - \frac{\delta x}{\delta t}(u_{i, q+1} - u_{i, q})\right)
\end{equation}
is the Lax-Friedrichs numerical flux. Let $u_i = (u_{i,0}, \ldots, u_{i,m}) \in \mathbb{R}^d$. With a little abuse of notation, we set $u = (u_0, \ldots, u_n) \in \mathbb{R}^{(n+1)d}$. Thus, the explicit scheme can be formulated as $u_{i+1} = \varphi(u_i)$.

For the implicit scheme, we use the implicit Lax-Friedrichs scheme, which is 
\begin{align}
    u_{i+1, q} ={} & u_{i, q} - \frac{\delta t}{\delta x} (\hat{f}_{i+1, q+\frac{1}{2}} - \hat{f}_{i+1, q-\frac{1}{2}}) + \nu\frac{\delta t}{\delta x^2} (u_{i+1, q+1} - 2u_{i+1, q} + u_{i+1, q-1}) \nonumber \\
    ={} & u_{i,q} + \frac{1}{2}(u_{i+1, q+1} - 2u_{i+1,q} + u_{i+1, q-1}) - \frac{\delta t}{2 \delta x}\left( \frac{u_{i+1,q+1}^2}{2} - \frac{u_{i+1, q-1}^2}{2}\right) \nonumber \\
    & + \nu\frac{\delta t}{\delta x^2} (u_{i+1, q+1} - 2u_{i+1, q} + u_{i+1, q-1}).
\end{align}
This implicit scheme can be written in the form $\phi(u_{i+1}) = u_i$. Suppose that we have the initial value $\hat{u}_0$, we can iteratively calculate the value $u_i$ for $i \ge 1$ using the explicit scheme $u_{i+1} = \varphi(u_i)$, but for the implicit scheme, the solution does not come from a straightforward iteration computation. A traditional method is to consider the following nonlinear equation.
\begin{equation}
    \begin{pmatrix}
        \phi(u_1) \\
        \phi(u_2) - u_1 \\
        \vdots \\
        \phi(u_n) - u_{n-1}
    \end{pmatrix} = 
    \begin{pmatrix}
        \hat{u}_0 \\
        0 \\
        \vdots \\
        0
    \end{pmatrix}.
\end{equation}
Then apply some solver for nonlinear equations such as Newton method to get the solution for the implicit scheme. Here, instead, we formulate the implicit scheme as the following optimization problem. 
\begin{subequations}\label{eq:imp_solver}
\begin{align}
    \min\ & \frac{1}{2}\norm{u_0 - \hat{u}_0}^2, \\
    \mathrm{s.t.}\ & \phi(u_{i+1}) = u_i,\ i=0, \ldots, n-1,
\end{align}
\end{subequations}
where it is obvious that the solution to this problem is the solution of this implicit scheme. However, this problem is exactly in the form of problem~\eqref{eq:imp_pro} with $f_0(u_0) = \frac{1}{2}\norm{u_0 - \hat{u}_0}^2$ and $f_i(u_i) = 0$ for $i\ge 1$. We can then apply the proximal ADMM algorithm~\eqref{eq:imp_admm} to the problem~\eqref{eq:imp_solver}. The advantage of this optimization formulation is that it enables further consideration of the design of algorithms that can be implemented parallel in time, for example, the algorithm discussed in~\cite{li2025numerical}. For the parameters, we set $\rho_j = 0.1$ for all $j=0, \ldots, n-1$ and $\eta_i = 2$ for all $i=0, \ldots, n$. The initial points of the iteration are $u_i = (0, \ldots, 0)$ for all $i=0, \ldots, n$ and $\lambda_j^0 = (0, \ldots, 0)$ for all $j = 0, \ldots, n-1$. We perform experiments for $\delta t = 0.1,\ n=20$ and $\delta t = 0.02,\ n=100$, respectively, to exhibit the stability of the implicit scheme for stiff problems.
\begin{figure}[htbp]
\centering
\includegraphics[scale=0.27]{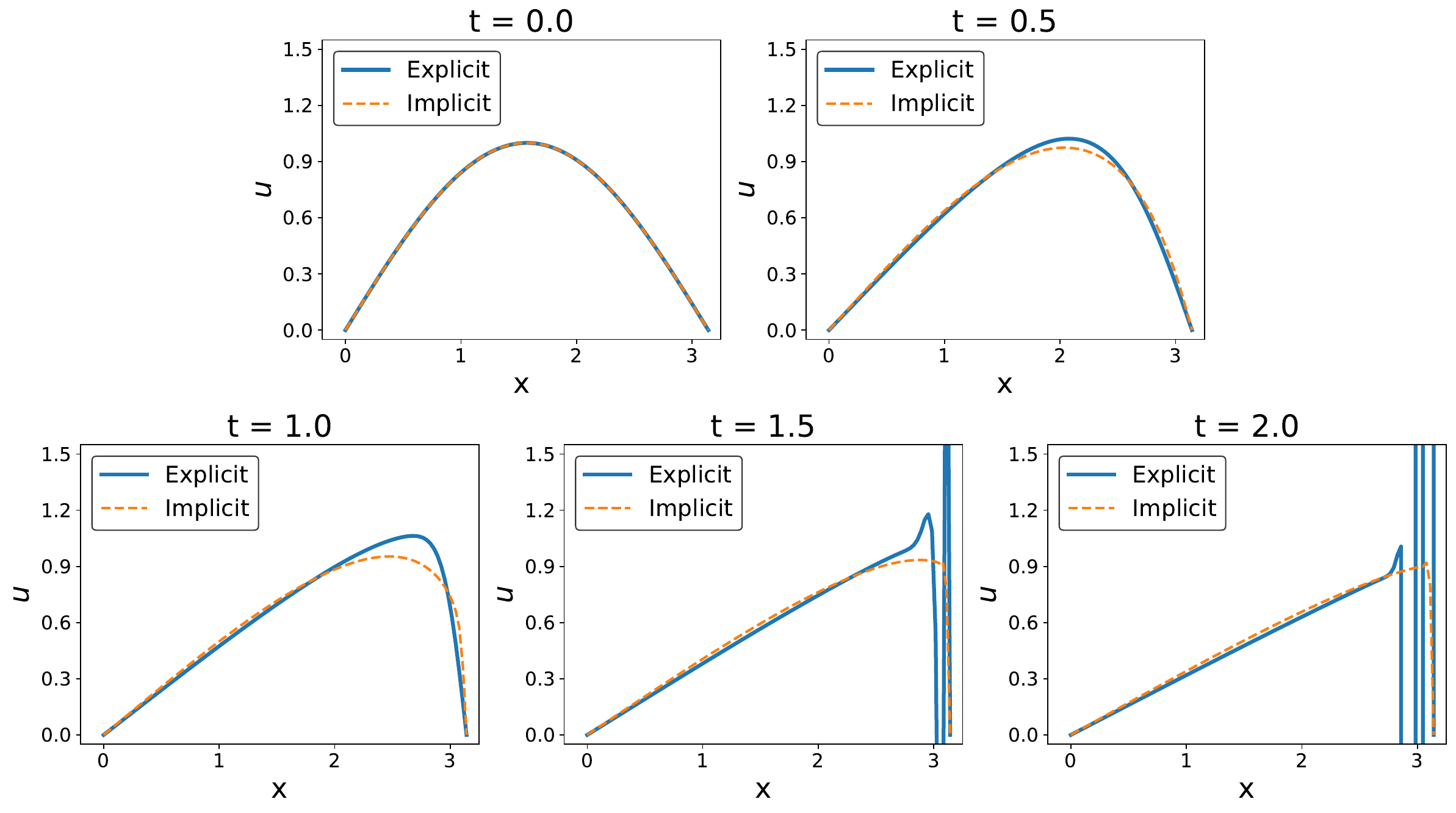}
\caption{\small The solution by explicit and implicit Lax-Fiedrichs flux of viscous Burger's equation for time step size $\delta t = 0.1$ } 
\label{fig:dynamic_1}
\end{figure}

\begin{figure}[htbp]
\centering
\includegraphics[scale=0.27]{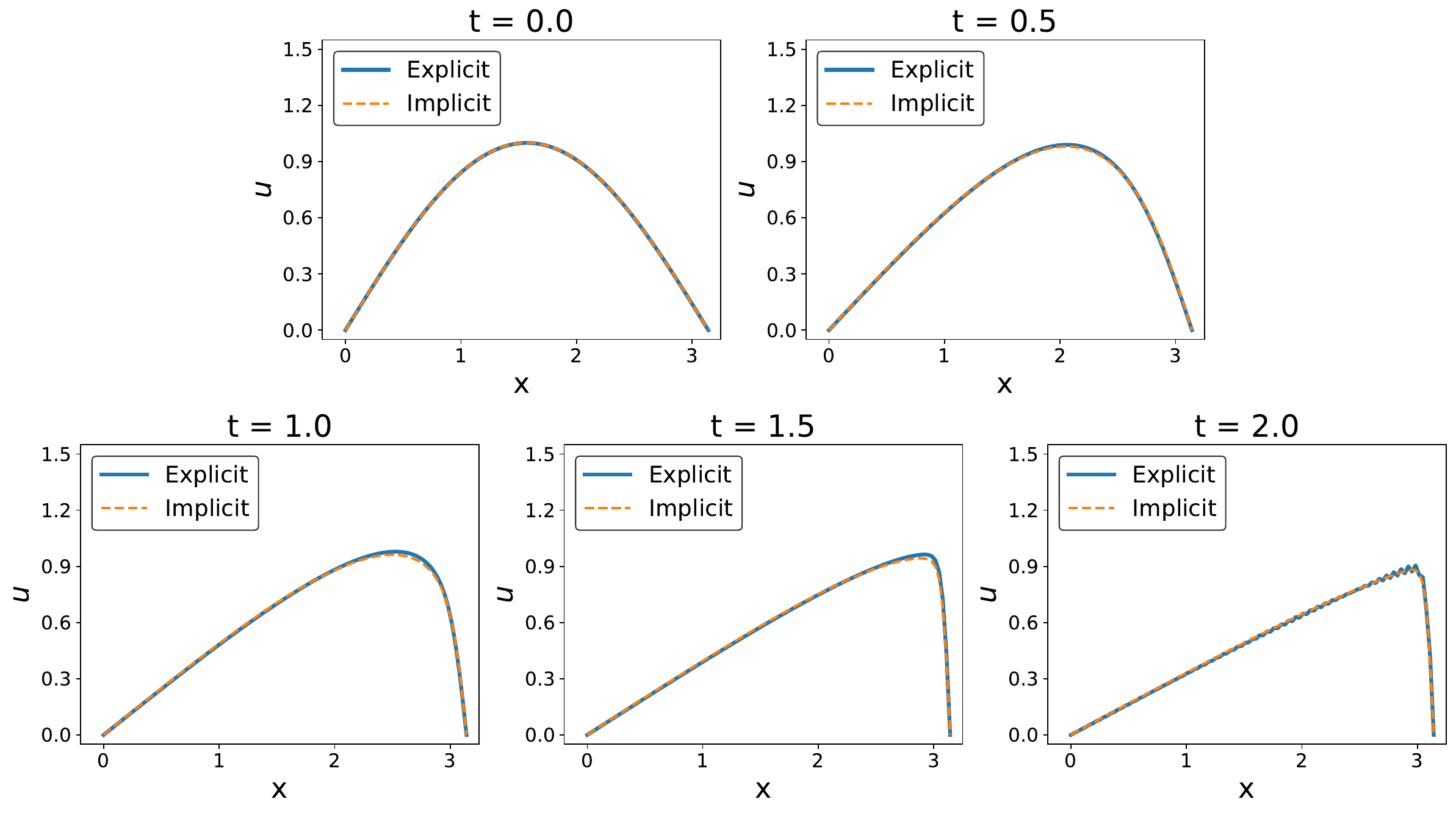}
\caption{\small The solution by explicit and implicit Lax-Fiedrichs flux of viscous Burger's equation for time step size $\delta t = 0.02$ } 
\label{fig:dynamic_2}
\end{figure}

Figure~\ref{fig:dynamic_1} and Figure~\ref{fig:dynamic_2} plot the solution of the viscous Burgers' equation for $\delta t = 0.1$ and $\delta t = 0.02$, respectively. It is clear that for $\delta t = 0.1$ the explicit Lax-Friedrichs scheme is unstable with low accuracy, while for both cases, the implicit Lax-Friedrichs scheme is stable and has high accuracy. For $\delta t = 0.02$, both methods have stable performance and the solutions are close to each other. We also plot the convergence result for the proximal ADMM method for both experiments.
\begin{figure}[htbp]
\centering
\begin{subfigure}[t]{0.45\linewidth}
\centering
\includegraphics[scale=0.19]{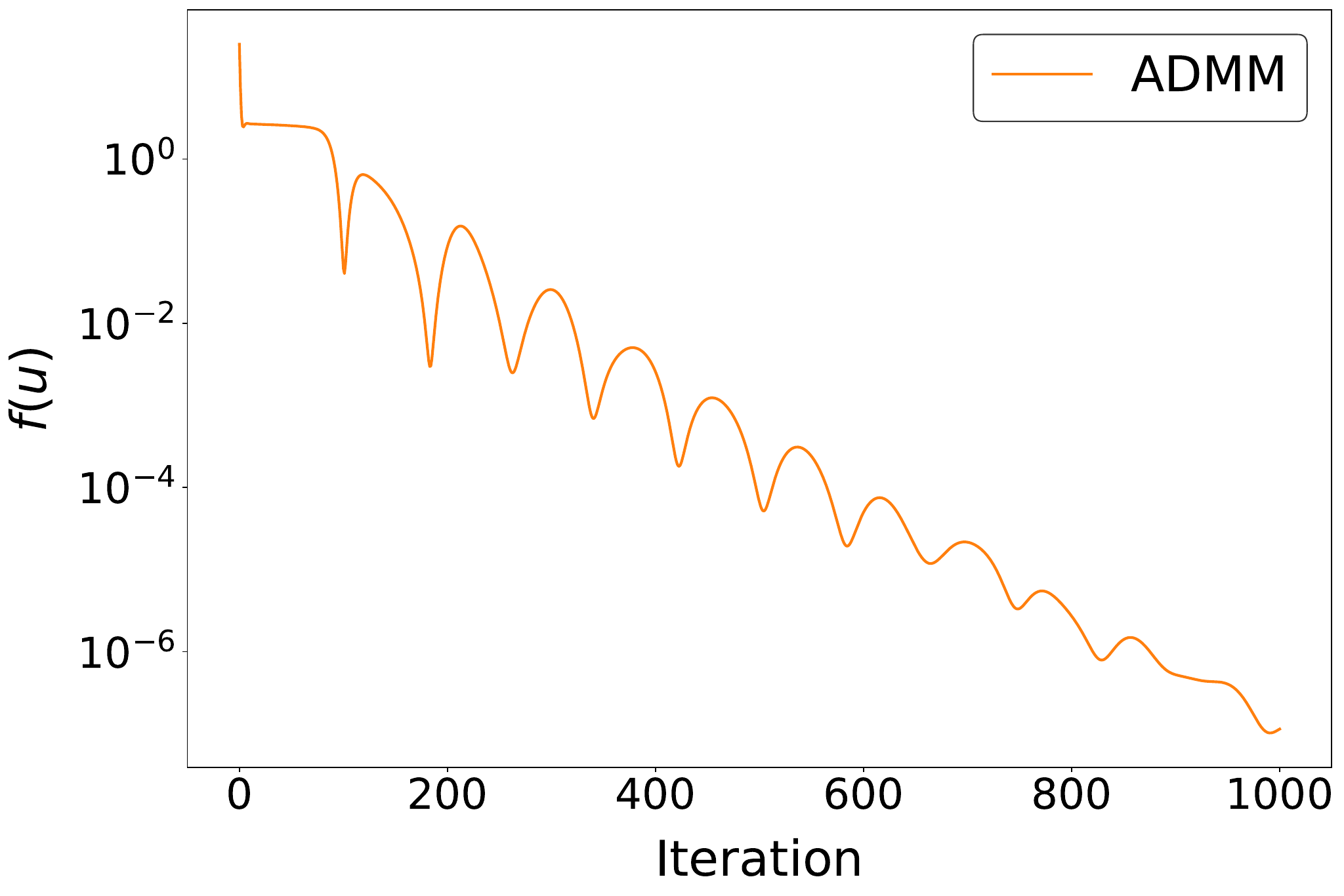}
\caption{Function Value}
\end{subfigure}
\begin{subfigure}[t]{0.45\linewidth}
\centering
\includegraphics[scale=0.19]{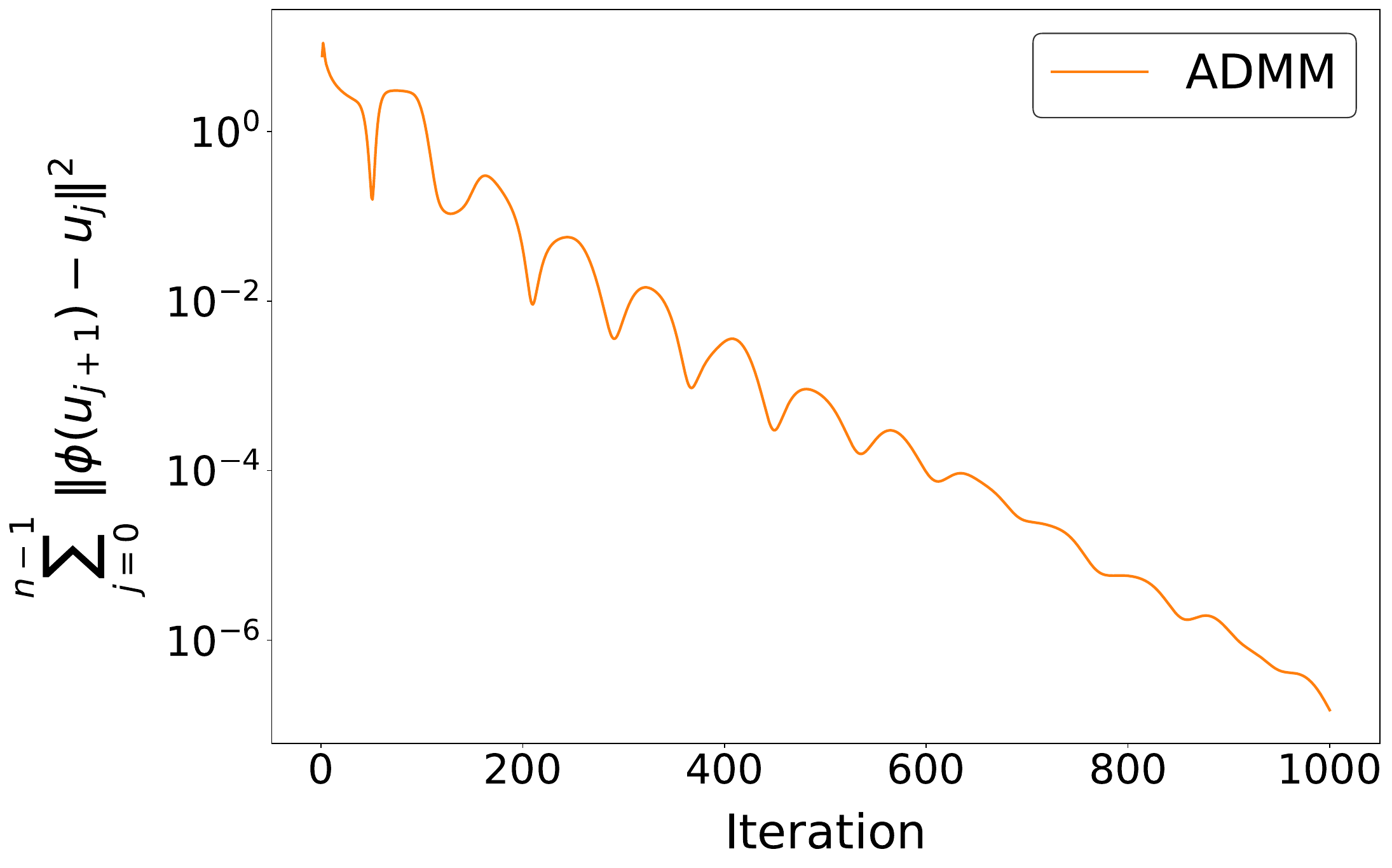}
\caption{Constraint error}
\end{subfigure}
\caption{\small The convergence graph of the proximal ADMM~\eqref{eq:imp_admm} applied to problem~\eqref{eq:imp_solver} as a solver for implicit Lax-Friedrichs scheme of the viscous Burgers' equation when $\delta t = 0.1$. } 
\label{fig:convergence_1}
\end{figure}

\begin{figure}[htbp]
\centering
\begin{subfigure}[t]{0.45\linewidth}
\centering
\includegraphics[scale=0.19]{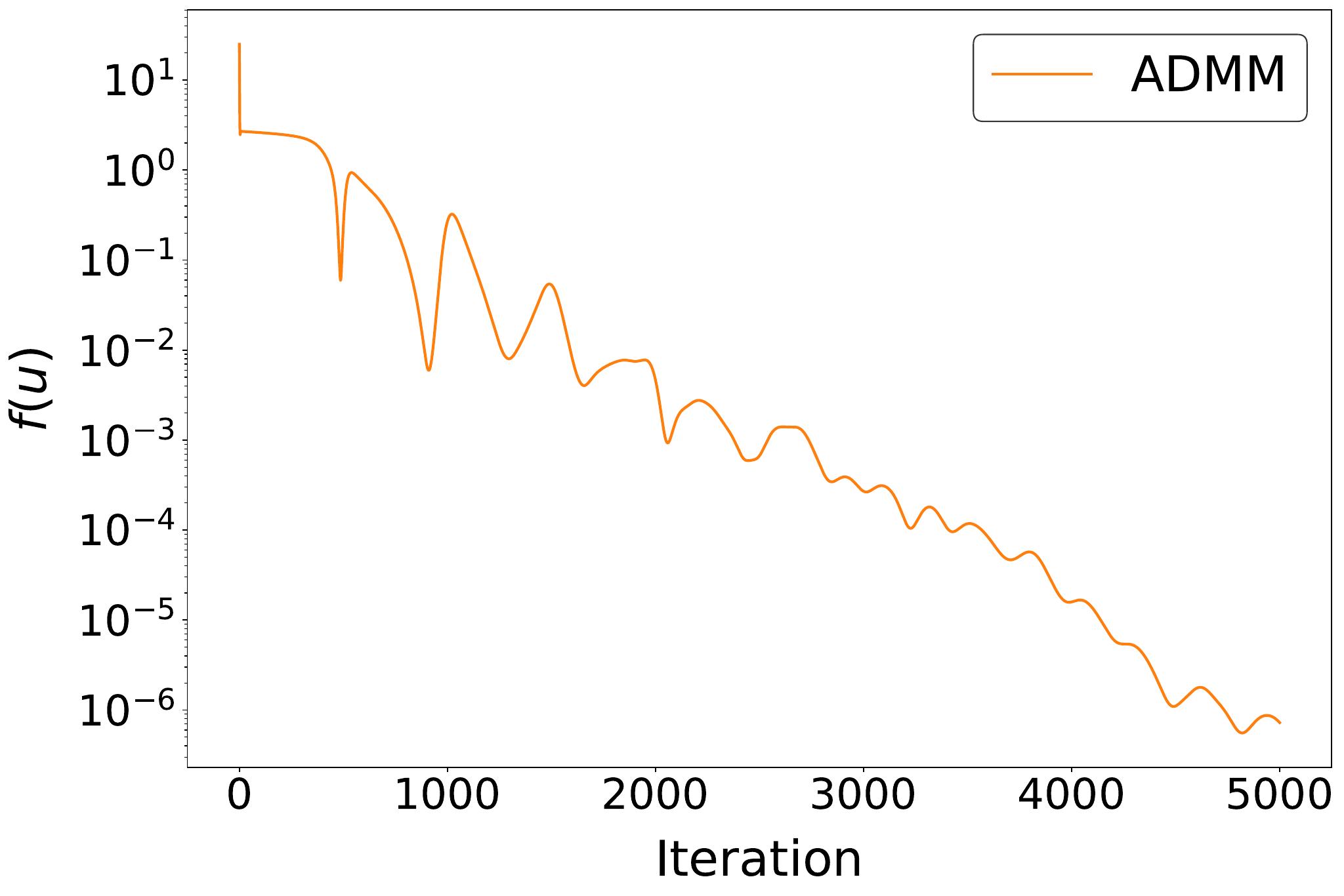}
\caption{Function Value}
\end{subfigure}
\begin{subfigure}[t]{0.45\linewidth}
\centering
\includegraphics[scale=0.19]{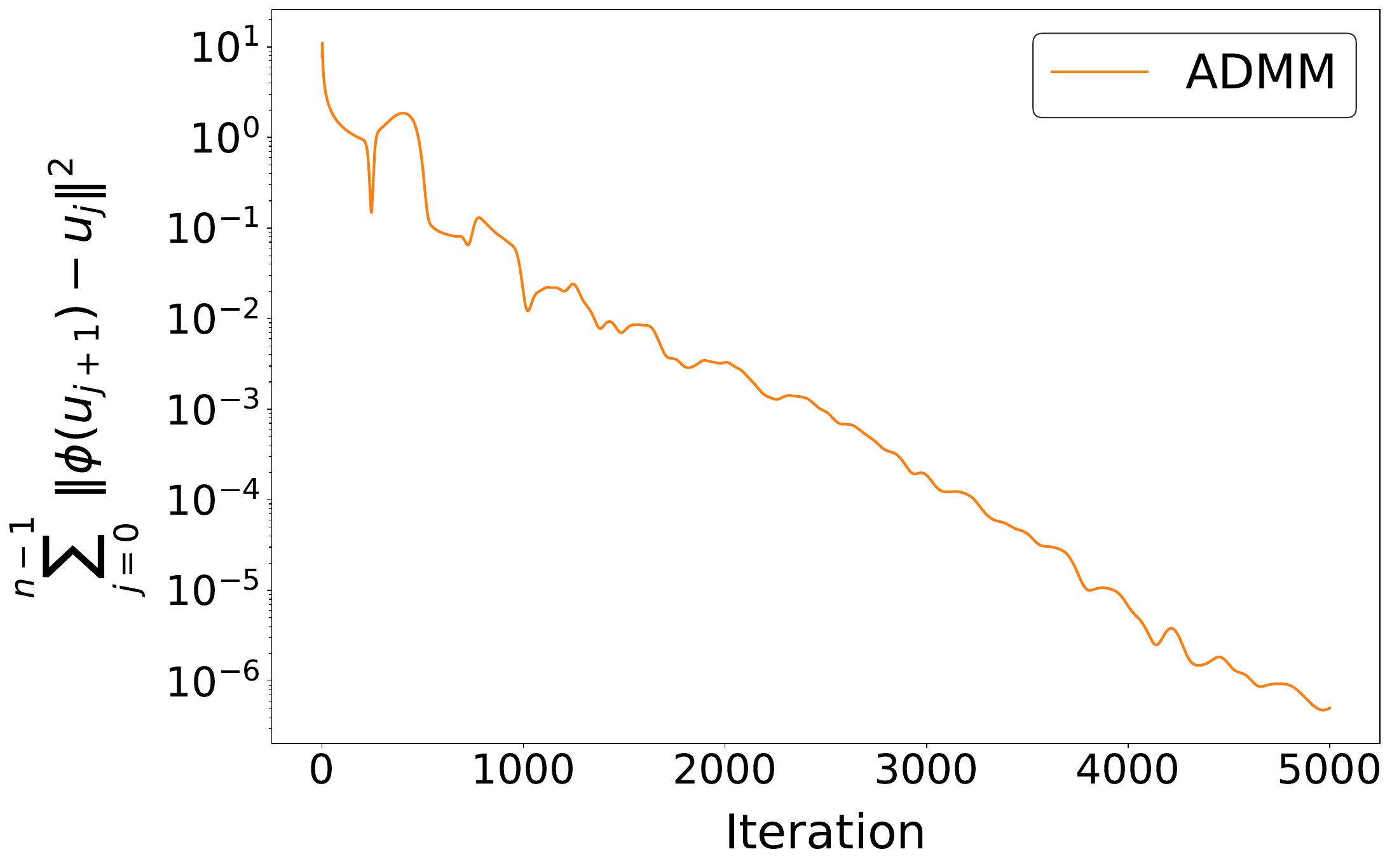}
\caption{Constraint error}
\end{subfigure}
\caption{\small The convergence graph of the proximal ADMM~\eqref{eq:imp_admm} applied to problem~\eqref{eq:imp_solver} as a solver for implicit Lax-Friedrichs scheme of the viscous Burgers' equation when $\delta t = 0.02$. } 
\label{fig:convergence_2}
\end{figure}

Figure~\ref{fig:convergence_1} and Figure~\ref{fig:convergence_2} show the convergence of the proximal ADMM. In both cases, the algorithm converges to the minimum of the problem~\eqref{eq:imp_solver}. Notice that when $\delta t = 0.1$ the algorithm converges faster than in the case when $\delta t = 0.02$. This is because for $\delta t = 0.1$, the number of time steps is only $n=20$, but for $\delta t = 0.02$, $n=100$. It is a property of the proximal ADMM algorithm, usually the smaller the number of blocks $n$, the faster it converges. It mainly influences the scalars $c_i$ and $\tilde{c}_i$~\eqref{eq:const_lya} in our theorem. In fact, not only the convergence is faster, but also the running time is shorter, because the number of subproblems to be solved decreases for each iteration.


\section{Conclusion} \label{sec:conclusion}

This paper studied a family of nonconvex optimization problems with nonlinear constraints that arise from the discretization of variational problems with dynamics constraints. We applied the proximal ADMM algorithm~\eqref{eq:admm} and analyzed the convergence. We proved that for local Lipschitz and local $L$-smooth functions, with mild additional assumptions, the sequence generated by the proximal ADMM is bounded. In addition, every accumulation point is a KKT point of the optimization problem. Furthermore, we show that there is a convergent subsequence that converges at the rate of $o(1/k)$. 

We discussed the generalization of our results to different versions of problems, including handling non-autonomous dynamical systems, implicit schemes, and optimal control problems. We also performed numerical experiment to implement the proximal ADMM algorithm to the 4D variational data assimilation problem and to solve the implicit scheme for Burgers' equation. The proximal ADMM has the advantages of choosing the initial point and finding the global minimum compared to the traditional adjoint method. It also provides a different way to solve the implicit schemes, which is formulated as an optimization problem rather than a nonlinear equation.

For future research, it is valuable to analyze the convergence properties of the ADMM algorithm with Jacobi decomposition which enables parallel implementation. The convergence guarantee is still lacking for nonconvex optimization problems with nonlinear constraints. Another direction is to find a way to dynamically adjust the parameters $\rho_i$ and $\eta_i$ to accelerate the speed of convergence. For now, the convergence is still slow. It is important to have a faster implementation in real applications.

\bibliographystyle{plain}
\bibliography{references}

\end{document}